\documentclass[a4paper, 12pt]{amsart}
\usepackage[T1]{fontenc}
\usepackage{xspace}
\usepackage{enumerate}
\usepackage[utf8]{inputenc} 
\usepackage[english]{babel}
\usepackage{lmodern}
\usepackage{microtype}
\usepackage{amssymb}
\usepackage{verbatim}
\usepackage{amsthm}
\usepackage{amsmath}
\usepackage{mathtools}
\usepackage{color}
\usepackage{tikz}
\usepackage{threeparttable}
\usepackage{graphicx}
\usepackage{caption}
\usepackage{paralist}
\usepackage[pagebackref=true, pdfborder={0 0 0}, linktocpage]{hyperref}
\usepackage{backref}
\usepackage{aliascnt}
\usepackage{cleveref}
\usepackage{dsfont}
\usepackage{mathrsfs}
\usepackage{bm}

\newcommand{\R}{\mathbb{R}}

\renewcommand{\P}{\mathbb{P}}
\newcommand{\E}{\mathbb{E}}

\newcommand{\cC}{\mathcal{C}}

\newcommand{\cK}{\mathcal{K}}
\newcommand{\cL}{\mathcal{L}}

\newcommand{\cT}{\mathcal{T}}


\newtheorem{theorem}{Theorem}[section]
\newtheorem{lemma}[theorem]{Lemma}

\newtheorem{assum}[theorem]{Assumption}
\theoremstyle{remark}
\newtheorem{remark}[theorem]{Remark}



\numberwithin{equation}{section}

\begin{document}

\title[Asymptotics of ESRF in discrete and continuous time]{Mean field limit of Ensemble Square Root Filters - discrete and continuous time}
\author{Theresa Lange}
\address{Institut f\"ur Mathematik, Technische Universit\"at Berlin, Stra{\ss}e des 17. Juni 136, D-10623 Berlin}
\email{tlange@math.tu-berlin.de}

\author{Wilhelm Stannat}
\address{Institut f\"ur Mathematik, Technische Universit\"at Berlin, Stra{\ss}e des 17. Juni 136, D-10623 Berlin and Bernstein Center for Computational Neuroscience, Philippstr. 13, D-10115 Berlin}
\email{stannat@math.tu-berlin.de}

\date{Berlin, January 04, 2021}

\begin{abstract}
Consider the class of Ensemble Square Root filtering algorithms for the numerical approximation of the posterior distribution of nonlinear Markovian signals, partially observed with linear observations corrupted with independent measurement noise. We analyze the asymptotic behavior of these algorithms in the large 
ensemble limit both in discrete and continuous time. We identify limiting mean-field processes on the level of the ensemble members, prove corresponding propagation of chaos results and derive associated convergence rates in terms of the ensemble size. In continuous time we also identify the stochastic partial differential equation driving the distribution of the mean-field process and perform a comparison with the Kushner-Stratonovich equation. 
\end{abstract}

\keywords{Mean field limit, Propagation of chaos,  Ensemble Square Root Filter}
\subjclass[2010]{60H35, 93E11, 60F99}

\maketitle 

\section{Introduction}\label{intro}
\noindent
Ensemble Square Root Filters (ESRF) belong to the class of ensemble-based Kalman-type filtering algorithms which specify evolution equations of an ensemble inspired by the Kalman Filter \cite{kalman1960}. Filtering algorithms in general aim at approximating the current state of a partially known system with the help of observations. Precisely, consider the following setting: 
\begin{align*} 
X_k &= B\left(X_{k-1}\right) + CW_k,\\
Y_k &= HX_k + \Gamma V_k
\end{align*}
where the $d$-dimensional dynamical system $X$ is observed by the $q$-dimensional process $Y$. Both $W$ and $V$ are independent Gaussian processes denoting model and measurement error with positive definite covariance matrices $Q:= CC^T$ and $R:= \Gamma \Gamma^T$, respectively, and $B:\mathbb{R}^{d} \rightarrow \mathbb{R}^{d}$ is assumed to be globally Lipschitz, as well as $H \in \mathds{R}^{q\times d}$. The problem is to identify (or at least approximate) the conditional distribution
\[ 
\pi_k(x) := \mathds{P}\left[X_k = x | \mathcal{Y}_{0:k}\right] 
\]
where $\mathcal{Y}_{0:k} := \sigma\left(Y_0, ..., Y_k\right)$ is the sigma algebra generated by all past and the current observation. \\
Recall that if $B$ is linear and $\pi_0$ is Gaussian, then also $\pi_k$ is Gaussian, reducing the computation of $\pi_k$ to the well-known Kalman Filter specifying recursive equations for mean and covariance matrix.\\
\noindent
In the nonlinear setting, various Monte-Carlo simulation approaches have been developed in order to find an estimate for the in this case non-Gaussian $\pi$. In contrast to the class of particle filters approximating the full distribution, ensemble-based Kalman-type filtering algorithms propose an ensemble-version of the above Kalman filtering equations thus specifying only an estimate of first and second moment of $\pi_k$.\\
Whereas the asymptotic behavior of classical sequential Monte-Carlo approximations has been studied thoroughly in the literature, the large ensemble limit of the above class of ensemble-based algorithms is yet largely unexplored apart from e.g. \cite{leGland2009}, \cite{mandel2011}, \cite{kwiatkowski2015}, or \cite{abishop2020b} and references therein. Intuitively, assuming a linear, Gaussian setting, one would expect that ensemble-based Kalman-type filtering algorithms admit a consistent mean field limit due to their close relationship to the Kalman Filter.\\
In this paper, our investigations will focus on the class of ESRF 
algorithms which are widely used in the geosciences (see e.g. \cite{anderson2001}, \cite{leeuwenburgh2005}, \cite{okane2008}, \cite{beyou2013}). Popular examples are the Ensemble Adjustment Kalman Filter (EAKF, see \cite{anderson2001}), the Ensemble Transform Kalman Filter (ETKF, see \cite{bishop2001}), and the unperturbed EnKF (Whitaker, Hamill (2002), see \cite{whitaker2002}), as surveyed in \cite{tippett2003}. With the exceptions of a few works such as \cite{kwiatkowski2015}, these algorithms lack an in-depth analysis in the mean field literature so far.\\
The class of ESRF divides into transformations on the signal space (adjustment filters) and on the ensemble space (transform filters). The latter cannot be interpreted in a mean field sense in general since the associated transformations of these filters will not be homogeneous over the whole ensemble. Nevertheless, we can identify conditions on transform filters to ensure a uniform treatment of the ensemble and hence allow for a mean field limit, as made precise in Assumption \ref{AssumT} in Section \ref{algoDiscr}.\\
Thus in the discrete-time setting, we prove results on the asymptotic behavior for this class of filtering algorithms and characterize their limiting mean field processes on the level of the ensemble members. This distinguishes our work from the existing literature, e.g. \cite{kwiatkowski2015} specifying in the setting of linear, perfect models a mean field limit solely for first and second moment. In particular in Theorem \ref{thm:MeanFieldNonlinear}, we provide a corresponding propagation of chaos result, which we complement with the derivation of convergence rates in Theorem \ref{error}. Finally we investigate consistency of the mean field limit process which holds true in the linear and Gaussian setting.\\
\noindent
We also consider the continuous-time filtering problem
\begin{align*}
{\rm d}X_t &= B\left(X_t\right){\rm d}t + Q^{\frac{1}{2}}{\rm d}W_t,\\
{\rm d}Y_t &= HX_t{\rm d}t + C^{\frac{1}{2}}{\rm d}V_t.
\end{align*}
Motivated by the continuous time limit analysis of discrete-time ESRF algorithms conducted in \cite{lange2019}, we investigate in Section \ref{secMeanCont} the asymptotic behavior of ensemble-based Kalman-Bucy-type filtering algorithms initially proposed in \cite{bergemann2012}, which we will introduce in more detail in Section \ref{algoCont}. In the simpler case of $H = {\rm Id}$, a mean field analysis has been conducted in \cite{deWiljes2018}. We provide corresponding mean field results in the case of general matrices $H$ as made precise in Theorem \ref{resultMeanCont}, investigate the linear case and discuss the relationship of the stochastic partial differential equation driving the distribution of the mean-field process and the Kushner-Stratonovich equation.

\subsection{Notation}
Throughout the paper, we will be using standard notation: for a vector $x \in \mathds{R}^{n}$ and a matrix $A \in \mathds{R}^{n \times m}$ let $x^T$ and $A^T$ denote the vector transpose and the matrix transpose, respectively. Further, let $\|x\|$ denote a vector norm on Euclidean space $\mathds{R}^n$ as well as $\|A\|$ denote the spectral norm and $\text{tr}(A)$ the trace of $A$. For a Lipschitz-continuous function $f$, we use $\|f\|_{\text{Lip}}$ for the Lipschitz constant. Also let $\mathds{R}^{d\times d}_{sym ,+}$ denote the set of symmetric positive semidefinite matrices in $\mathds{R}^{d \times d}$.\\
Additionally, we will abbreviate "independent and identically distributed" by "i.i.d.", as well as "almost surely with respect to the probability measure $\mathds{P}$" by "$\mathds{P}$-a.s.". 

\section{Algorithms}\label{algo}
\noindent

\subsection{Discrete time}\label{algoDiscr}
\noindent
Consider the discrete-time setting
\begin{align}
X_k &= B\left(X_{k-1}\right) + CW_k,\label{DiscrX}\\
Y_k &= HX_k + \Gamma V_k\label{DiscrY}
\end{align}
where $W_k$, $V_k$, $k \in \mathds{N}$, are independent Gaussian random variables and $C$ and $\Gamma$ are such that $Q:= CC^T$ and $R:= \Gamma \Gamma^T$ are positive definite. As indicated in the introduction, ESRF algorithms consists of two steps that are iterated in order to obtain an approximation of mean and covariance matrix of the posterior distribution $\pi_k$ with the help of the empirical mean and covariance 
\begin{align*}
\bar{x}_k^{a} &:= \frac{1}{M} \sum_{i=1}^M X_k^{(i),a},\\
P_k^a &:= \frac{1}{M-1} \sum_{i=1}^M \left(X_k^{(i),a} - \bar{x}_k^a\right)\left(X_k^{(i),a}  
- \bar{x}_k^a\right)^T 
\end{align*}
of some ensemble of $M$ particles $X_k^{(i),a}$, $1\leq i\leq M$.\\
\noindent
In the \textit{forecast} step, the ensemble members $\left(X^{(i),a}_{k-1}\right)_{1\leq i\leq M}$ from 
the previous estimation cycle are propagated according to (\ref{DiscrX}):  
\begin{equation} 
\label{ForwardStep} 
X_k^{(i),f} := B\left(X_{k-1}^{(i),a}\right) + CW_k^{(i)}
\end{equation} 
where $W_k^{(i)}$, $1\leq i\leq M$, are independent copies of $W_k$, and superscript $f$ denotes "forecast". Let  
\begin{align*}
\bar{x}_k^{f} &:= \frac{1}{M} \sum_{i=1}^M X_k^{(i),f},\\
P_k^f &:= \frac{1}{M-1} \sum_{i=1}^M \left(X_k^{(i),f} - \bar{x}_k^f\right) \left(X_k^{(i),f}  
- \bar{x}_k^f\right)^T 
\end{align*}
be the corresponding empirical mean and covariance matrix, as well as  
\[E_k^f :=   \left[X_k^{(i),f} - \bar{x}_k^f\right]_{i=1}^M \]
be the $d\times M$-matrix with the centered forecast ensemble members as column vectors.\\
In the \textit{update} step, the ESRF transforms the forecast ensemble into an \textit{updated}, 
or \textit{analyzed} ensemble $X_k^{(i),a}$, $1\leq i\leq M$, (indicated by superscript $a$) taking into account the new observation 
$Y_k$, according to the following ansatz:  
\begin{equation} 
\label{AnalysisStep} 
X_k^{(i),a} := \bar{x}_k^a + E_k^a e_i
\end{equation} 
where $\bar{x}_k^a$, the empirical mean of the analyzed ensemble members, is defined as 
\begin{equation} 
\label{EmpMeanAnalyzedParticles} 
\bar{x}_k^a := \bar{x}_k^f + \cK\left(P_k^{f}\right) (Y_k - H\bar{x}_k^f )
\end{equation}
with Kalman gain map $\cK: \mathds{R}^{d\times d}_{sym ,+} \rightarrow \mathds{R}^{d \times d}$ defined by
\begin{equation} 
\label{KalmanGain} 
\cK(P) := P H^T\left( R + H P H^T \right)^{-1},  
\end{equation}
and the $d\times M$-matrix 
\begin{equation}
E_k^a := \tau\left(E_k^f\right)
\end{equation}
is obtained via some nonlinear transformation  
\[\tau: \mathds{R}^{d\times M} \to \mathds{R}^{d\times M}\] 
of the centered forecast ensemble members such that the corresponding empirical covariance matrix 
$P_k^a$ satisfies the identity  
\begin{equation}
\label{eq:consistency} 
\begin{aligned} 
P_k^a & = \frac{1}{M-1} E_k^a \left( E_k^a\right)^T = \frac{1}{M-1} \tau\left(E_k^f\right)  
\left( \tau\left(E_k^f\right)\right)^T \\
&\overset{!}{=} \left({\rm Id} - \cK\left(P_k^{f}\right) H \right) P_k^f    
= P_k^f - P_k^f H^T\left( R + H P_k^f H^T \right)^{-1} H P_k^f   
\end{aligned} 
\end{equation} 
and $e_i$ denotes the $i$-th standard normal basis vector in $\mathds{R}^{M}$.

\subsubsection{Derivations of $\tau$}\label{DervTau}
The precise form of $\tau$ is specific to each ESRF algorithm. In this paper, we will consider three 
choices of $\tau$ surveyed in \cite{tippett2003}: 

\begin{itemize} 
\item[(i)] the Ensemble Adjustment Kalman Filter (EAKF) 
\begin{equation} 
\label{EAKFtau}
\tau\left(E_k^{f}\right) := A_kE_k^{f}
\end{equation}
for suitable transformations $A_k$ on the space of the signal process $X$,   
\item[(ii)] the Ensemble Transform Kalman Filter (ETKF) 
\begin{equation} 
\label{ETKFtau}
\tau\left(E_k^{f}\right) := E_k^{f} T_k
\end{equation}
for suitable transformations $T_k$ on the space of the ensemble members, 
\item[(iii)] and as a particular case of the adjustment filter, the unperturbed filter 
proposed in \cite{whitaker2002}, with transformation 
\begin{equation}
\label{whitaker1}
\tau\left(E_k^{f}\right) := \left({\rm Id} - \tilde{K}_kH\right)E_k^{f}
\end{equation}
where
\begin{equation}
\label{whitaker2} 
\tilde{K}_k := P_k^{f}H^T\left(R+HP_k^{f}H^T\right)^{-\frac{1}{2}}\left(\left( R+HP_k^{f}H^T\right)^{\frac{1}{2}} + R^{\frac{1}{2}}\right)^{-1}.
\end{equation}
\end{itemize} 

\medskip 
\noindent 
Two immediate choices for the respective transformation matrices $A_k$ and $T_k$ in the former two examples 
can be derived from the two canonical factorizations 
$$ 
P_k^f = \sqrt{P_k^f} \sqrt{P_k^f} = \frac 1{M-1} E_k^f (E_k^f)^T 
$$ 
of the empirical covariance matrix on the state space (resp. on the ensemble space), as follows: 

\begin{itemize} 
\item[(a)] Using the first factorization and using $\bar{\cK}_k := \cK\left(P_k^{f}\right)$ and 
$U := \sqrt{P_k^f} H^T R^{-\frac 12}$, the right hand side of the consistency condition 
\eqref{eq:consistency} can be written as 
$$ 
\begin{aligned} 
&({\rm Id}- \bar{\cK}_k H) P_k^f \\
& = P_k^f - P_k^f H^T \left(R + HP_k^f H^T\right)^{-1} HP_k^f\\
& = \sqrt{P_k^f} \left( {\rm Id} - U\left( {\rm Id} + U^T U  \right)^{-1} U^T \right)\sqrt{P_k^f} \\ 
& = \sqrt{P_k^f} \left({\rm Id} + UU^T \right)^{-1} \sqrt{P_k^f} \\ 
& = \sqrt{P_k^f} \left({\rm Id} + \sqrt{P_k^f} H^T R^{-1} H\sqrt{P_k^f}\right)^{-1}\sqrt{P_k^f}.
\end{aligned}
$$
This way we obtain
\begin{equation} 
\label{EAKFTrans}
A_k := \sqrt{P_k^f} \left({\rm Id} + \sqrt{P_k^f} H^T R^{-1}H \sqrt{P_k^f}\right)^{-\frac{1}{2}} 
\sqrt{P_k^f}^{-1}
\end{equation}
corresponding to equation (18) in \cite{tippett2003}. $\sqrt{P_k^f}^{-1}$ denotes the pseudo inverse of 
$\sqrt{P_k^f}$. 


\item[(b)] Using the second factorization $P_k^f = E_k^f (E_k^f)^T$ on the ensemble space, where we let 
$E_k^f$ denote the centered ensemble members weighted by $\frac 1{\sqrt{M-1}}$ for notational ease,   
and this time $V := (E_k^f)^T H^T R^{-\frac 12}$, the right hand side of the consistency condition 
\eqref{eq:consistency} can be written as 
$$ 
\begin{aligned}
& \left({\rm Id}- \bar{\cK}_k H\right) P_k^f \\
& = E_k^f \left( {\rm Id}-  (E_k^f)^T H^T \left( R + HE_k^f (E_k^f)^T H^T\right)^{-1} 
  HE_k^f \right) (E_k^f)^T \\
& = E_k^f \left( {\rm Id}- V \left( {\rm Id} + V^T V\right)^{-1} V^T \right) \\
& = E_k^f \left(I + VV^T \right)^{-1} \left(E_k^{f}\right)^T 
\end{aligned} 
$$ 
which gives
\begin{equation} 
\label{ETKFTrans}
T_k :=\left({\rm Id} + VV^T\right)^{-\frac{1}{2}}
= \left( {\rm Id} + \left(E_k^f \right)^T H^T R^{-1} H E_k^f \right)^{-\frac{1}{2}} 
\end{equation}
corresponding to equation (16) in \cite{tippett2003}. 
\end{itemize} 

\noindent
It is worth to note that the transformations $A_k$ and $T_k$ are adjoint in the sense that 
\begin{equation} 
\label{adjointness} 
A_k E_k^f = E_k^f T_k . 
\end{equation} 
This is a consequence of the following integral representation 
\begin{equation}
\label{eq:SquareRoot}
\sqrt{P^{-1}} = \frac{1}{\sqrt{\pi}}\int_0^{\infty} \frac{1}{\sqrt{t}}e^{-tP}{\rm d}t.
\end{equation} 
for any symmetric positive definite matrix $P$. Indeed, if $u$ is an eigenvector of $P$ with eigenvalue 
$\lambda > 0$, then $e^{-tP}u = e^{-t\lambda}u$ and hence
\begin{equation*}
\frac{1}{\sqrt{\pi}}\int_0^{\infty} \frac{1}{\sqrt{t}}e^{-tP}{\rm d}t u 
= \frac{1}{\sqrt{\pi}}\int_0^{\infty} \frac{1}{\sqrt{t}}e^{-t\lambda}{\rm d}t u 
= \frac{1}{\sqrt{\lambda}}u.
\end{equation*}
Using \eqref{eq:SquareRoot} and 
$$ 
e^{-t\left( {\rm Id} + \sqrt{P_k^f}H^TR^{-1}H \sqrt{P_k^f}\right)} =  e^{-t} e^{-t\sqrt{P_k^f}H^TR^{-1}H 
\sqrt{P_k^f} } 
$$ 
$$
\text{resp. }  
e^{-t\left( {\rm Id} + (E_k^f)^T H^T R^{-1}H E_k^f\right)} =  e^{-t} e^{-t(E_k^f)^T H^T R^{-1}H E_k^f} , 
$$ 
we can then write 
\begin{equation} 
\label{IntegralTransformA}
\begin{aligned}  
\left({\rm Id} + \sqrt{P_k^f} H^T R^{-1}H \sqrt{P_k^f}\right)^{-\frac{1}{2}} 
& = \frac{1}{\sqrt{\pi}} \int_0^{\infty} \frac{e^{-t}}{\sqrt{t}} 
e^{-t\sqrt{P_k^f}H^TR^{-1}H \sqrt{P_k^f}}{\rm d}t, \\ 
\left( {\rm Id} + \left(E_k^f \right)^T H^T R^{-1} 
H E_k^f \right) ^{-\frac{1}{2}} 
& = \frac{1}{\sqrt{\pi}} \int_0^{\infty} \frac{e^{-t}}{\sqrt{t}} 
e^{-t(E_k^f)^T H^T R^{-1}H E_k^f}{\rm d}t .
\end{aligned} 
\end{equation}
\eqref{IntegralTransformA} then implies that 
$$ 
\begin{aligned} 
A_k E_k^f 
& = \sqrt{P_k^f} \left({\rm Id} + \sqrt{P_k^f} H^T R^{-1}H \sqrt{P_k^f}\right)^{-\frac{1}{2}} 
\sqrt{P_k^f}^{-1} E_k^f \\ 
& =  \frac{1}{\sqrt{\pi}} \int_0^{\infty} \frac{e^{-t}}{\sqrt{t}} 
\sqrt{P_k^f}e^{-t\sqrt{P_k^f}H^TR^{-1}H \sqrt{P_k^f}} \sqrt{P_k^f}^{-1} {\rm d}t E_k^f \\ 
& = \frac{1}{\sqrt{\pi}} \int_0^{\infty} \frac{e^{-t}}{\sqrt{t}} 
e^{-tP_k^f H^TR^{-1}H} {\rm d}t\, E_k^f 
\end{aligned} 
$$ 
and on the other hand, 
$$ 
\begin{aligned} 
E_k^f T_k 
& = E_k^f \left({\rm Id} + \left(E_k^f \right)^T H^T R^{-1} H E_k^f \right) ^{-\frac{1}{2}} 
\\ 
& = \frac{1}{\sqrt{\pi}} \int_0^{\infty} \frac{e^{-t}}{\sqrt{t}} 
E_k^f e^{-t(E_k^f)^T H^T R^{-1}H E_k^f}  {\rm d}t \\ 
& = \frac{1}{\sqrt{\pi}} \int_0^{\infty} \frac{e^{-t}}{\sqrt{t}} 
e^{-tP_k^fH^TR^{-1}H} {\rm d}t\,  E_k^f  
\end{aligned} 
$$ 
which proves \eqref{adjointness}. 

\begin{remark}
\label{remarkNonUn}
It is obvious that the consistency condition \eqref{eq:consistency} does not uniquely determine 
$\tau$. In particular, \eqref{eq:consistency} is invariant w.r.t. orthonormal transformations of the 
forward ensembles, i.e. multiplication with orthonormal matrices from the right (see e.g. 
\cite{tippett2003}, \cite{wang2004}, \cite{livings2008}). Throughout the paper, we will only consider the 
original, unmodified algorithms and leave this additional degree of freedom for future research.
\end{remark}

\subsubsection{Choice of $\tau$} 
The above analysis of the adjustment filter and the transform filter motivates our following structural 
assumption on $\tau$ that will be the basis of our subsequent analysis. Assume that $\tau$ is of the 
following form
\begin{equation}\label{RepresentationTransform}
E_k^{a} = \tau\left(E_k^{f}\right) =: \cT\left(P_k^{f}\right)E_k^{f}
\end{equation}
where 
$$ 
\cT : \mathbb{R}^{d\times d}_{sym ,+} \to \mathbb{R}^{d\times d}  
$$
is defined on positive semidefinite symmetric matrices, satisfying the following
\begin{assum} 
\label{AssumT}
There exist positive constants $\cC^{\cT}$ and $\cL^{\cT}$ such that 
\begin{equation}
\label{assumT1}
\left\|\cT(P)\right\| \leq \cC^{\cT}\left( 1 + \|P\|\right) 
\end{equation}
\begin{equation}
\label{assumT2}
\left\|\cT(P) - \cT(Q)\right\| \leq \cL^{\cT}\left( 1 + \|P\|^2 + \|Q\|^2\right) \left\|P-Q\right\|
\end{equation} 
for all $P, Q \in \mathds{R}^{d\times d}_{sym ,+}$.  
\end{assum}

\begin{lemma} 
\label{VerifyAssum}
For the EAKF and the ETKF we have that  
\begin{equation*}
\cT(P) := \frac{1}{\sqrt{\pi}} \int_0^{\infty} \frac{e^{-t}}{\sqrt{t}}e^{-tPH^TR^{-1}H}{\rm d}t 
\end{equation*} 
which satisfies Assumption \ref{AssumT}. Furthermore, the unperturbed filter by \cite{whitaker2002}  
also satisfies Assumption \ref{AssumT}.
\end{lemma}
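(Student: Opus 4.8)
The plan is to carry out all estimates on the integral representation
$$\cT(P)\;=\;\frac{1}{\sqrt{\pi}}\int_{0}^{\infty}\frac{e^{-t}}{\sqrt{t}}\,e^{-tPG}\,\mathrm{d}t,\qquad G:=H^{T}R^{-1}H\in\R^{d\times d}_{sym,+},$$
which is the form already obtained in \eqref{IntegralTransformA} and which makes sense for singular $P$ as well. Everything rests on one estimate: for every $P\in\R^{d\times d}_{sym,+}$ and every $t\ge 0$,
$$\bigl\|e^{-tPG}\bigr\|\;\le\;c_{G}\bigl(1+\|G\|\,\|P\|\,t\bigr),$$
with a constant $c_{G}$ depending only on $G$. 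I would prove this by decomposing $\R^{d}=\operatorname{ran}(G)\oplus\ker(G)$: in the induced block form $G=\operatorname{diag}(G_{1},0)$ with $G_{1}>0$, the matrix $PG$ is block lower triangular, hence $e^{-tPG}$ is block lower triangular with diagonal blocks $e^{-tP_{1}G_{1}}$ and $\mathrm{Id}$ and lower-left block $-P_{12}^{T}G_{1}\int_{0}^{t}e^{-sP_{1}G_{1}}\mathrm{d}s$. Since $P_{1}G_{1}$ is similar through $G_{1}^{1/2}$ to the symmetric positive semidefinite matrix $S_{1}:=G_{1}^{1/2}P_{1}G_{1}^{1/2}$, one gets $\|e^{-tP_{1}G_{1}}\|\le\kappa(G_{1})^{1/2}$ and $\|\int_{0}^{t}e^{-sP_{1}G_{1}}\mathrm{d}s\|\le\kappa(G_{1})^{1/2}t$ (using $(1-e^{-s\sigma})/\sigma\le s$), whereas $\|P_{12}^{T}G_{1}\|\le\|P\|\,\|G\|$ because $P_{12}$ is an off-diagonal block of a positive semidefinite matrix; collecting these gives the estimate with $c_{G}=\kappa(G_{1})^{1/2}$ (for singular $P$ one argues by continuity).

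Given this estimate, \eqref{assumT1} is immediate: integrating against $e^{-t}/\sqrt{\pi t}\,\mathrm{d}t$ and using $\tfrac{1}{\sqrt{\pi}}\int_{0}^{\infty}t^{-1/2}e^{-t}\mathrm{d}t=1$ and $\tfrac{1}{\sqrt{\pi}}\int_{0}^{\infty}t^{1/2}e^{-t}\mathrm{d}t=\tfrac12$ yields $\|\cT(P)\|\le c_{G}(1+\tfrac12\|G\|\,\|P\|)$. For \eqref{assumT2} I would use the variation-of-constants formula
$$e^{-tPG}-e^{-tQG}\;=\;-t\int_{0}^{1}\!\!\int_{0}^{1}e^{-stA_{\theta}G}\,(P-Q)G\,e^{-(1-s)tA_{\theta}G}\,\mathrm{d}s\,\mathrm{d}\theta,\qquad A_{\theta}:=\theta P+(1-\theta)Q,$$
observe $A_{\theta}\in\R^{d\times d}_{sym,+}$ with $\|A_{\theta}\|\le\|P\|+\|Q\|$, bound the two exponentials by the key estimate, and integrate against $e^{-t}/\sqrt{\pi t}\,\mathrm{d}t$; the resulting $t$-moments of orders $1,2,3$ are finite, and $\|P\|+\|Q\|\le 1+\|P\|^{2}+\|Q\|^{2}$, $(\|P\|+\|Q\|)^{2}\le 2(1+\|P\|^{2}+\|Q\|^{2})$ absorb the polynomial factors, giving \eqref{assumT2} with an $\cL^{\cT}$ depending only on $G$.

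For the unperturbed filter of \cite{whitaker2002} one has $\cT(P)=\mathrm{Id}-\tilde K(P)H$ with $\tilde K(P)=PH^{T}M_{P}^{-1/2}(M_{P}^{1/2}+R^{1/2})^{-1}$ and $M_{P}:=R+HPH^{T}\ge R>0$. Operator monotonicity of $X\mapsto X^{1/2}$ gives $M_{P}^{1/2}\ge R^{1/2}$, hence $\|M_{P}^{-1/2}\|\le\|R^{-1}\|^{1/2}$ and $\|(M_{P}^{1/2}+R^{1/2})^{-1}\|\le\tfrac12\|R^{-1}\|^{1/2}$, uniformly in $P$; this gives $\|\tilde K(P)\|\le\tfrac12\|H\|\,\|R^{-1}\|\,\|P\|$ and hence \eqref{assumT1}. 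For \eqref{assumT2} I would expand $\tilde K(P)-\tilde K(Q)$ as a telescoping sum over its three $P$-dependent factors; since $M_{P}-M_{Q}=H(P-Q)H^{T}$ has norm $\le\|H\|^{2}\|P-Q\|$ and the maps $M\mapsto M^{-1/2}$ and $M\mapsto(M^{1/2}+R^{1/2})^{-1}$ are Lipschitz on $\{M\ge R\}$ with constants depending only on $\|R^{-1}\|$ — via the resolvent identity and the square-root bound $\|M^{1/2}-N^{1/2}\|\le\|M-N\|/(\lambda_{\min}(M)^{1/2}+\lambda_{\min}(N)^{1/2})$, itself a consequence of the Sylvester estimate $\|X\|\le\|C\|/(\lambda_{\min}(A)+\lambda_{\min}(B))$ for $AX+XB=C$ with $A,B>0$ — the only source of growth is the factor $P$ (or $Q$) standing in front of one difference term, and $\|P\|\le\tfrac12(1+\|P\|^{2})$ turns it into \eqref{assumT2}.

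The hard part is the key estimate on $\|e^{-tPG}\|$. Since $G=H^{T}R^{-1}H$ is typically singular ($q<d$), the matrix $PG$ is in general neither symmetric nor diagonalizable, so the crude bound $\|e^{-tPG}\|\le e^{t\|PG\|}$ is useless — it is not even integrable against $e^{-t}$ unless $\|PG\|<1$ — and one must genuinely exploit that $PG$ is a product of two positive semidefinite matrices, through the $\operatorname{ran}(G)\oplus\ker(G)$ block-triangular decomposition and the similarity $P_{1}G_{1}\sim G_{1}^{1/2}P_{1}G_{1}^{1/2}$, to get the correct growth that is only linear in $t$.
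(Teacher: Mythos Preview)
Your argument is correct, but the paper's proof of the key estimate is considerably simpler. Instead of decomposing $\R^d=\operatorname{ran}(G)\oplus\ker(G)$ and conjugating by $G_1^{1/2}$, the paper conjugates by $\sqrt{P}$: writing $\Theta=H^TR^{-1}H$ and using $e^{-sP\Theta}P=\sqrt{P}\,e^{-s\sqrt{P}\Theta\sqrt{P}}\,\sqrt{P}$ (valid for all $P\ge 0$ via the identity $f(AB)A=Af(BA)$), one gets
\[
e^{-tP\Theta}-\mathrm{Id}=-\int_0^t \sqrt{P}\,e^{-s\sqrt{P}\Theta\sqrt{P}}\,\sqrt{P}\,\Theta\,\mathrm{d}s,
\]
and since $\sqrt{P}\Theta\sqrt{P}\ge 0$ is symmetric, $\|e^{-s\sqrt{P}\Theta\sqrt{P}}\|\le 1$, yielding $\|e^{-tP\Theta}-\mathrm{Id}\|\le t\|P\|\|\Theta\|$ directly. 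This is your key estimate with $c_G=1$, obtained without any block triangularisation and without the condition number $\kappa(G_1)^{1/2}$ appearing in the constant. For \eqref{assumT2} the paper then uses the plain Duhamel formula $e^{-tP\Theta}-e^{-tQ\Theta}=\int_0^t e^{-(t-s)P\Theta}(Q-P)\Theta e^{-sQ\Theta}\,\mathrm{d}s$ together with the bound just derived, rather than your $\theta$-interpolated variant; both lead to the same polynomial-in-$\|P\|,\|Q\|$ prefactor after integrating in $t$.

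For the Whitaker--Hamill filter the two arguments are essentially the same: the paper also uses $\|M_P^{-1/2}\|\le\|R^{-1/2}\|$ and $\|(M_P^{1/2}+R^{1/2})^{-1}\|\le\tfrac12\|R^{-1/2}\|$ from operator monotonicity, and an explicit algebraic rearrangement of $\cT(P)-\cT(Q)$ (rather than a generic telescoping sum) together with the square-root Lipschitz estimate $\|M_P^{1/2}-M_Q^{1/2}\|\le\tfrac12\|R^{-1/2}\|\|H(P-Q)H^T\|$, for which it cites van Hemmen--Ando. Your Sylvester-equation justification of this last inequality is equivalent.

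In short: your proof works, but what you identify as ``the hard part'' --- controlling $\|e^{-tPG}\|$ when $G$ is singular and $PG$ is non-normal --- is handled in the paper by a one-line $\sqrt{P}$-conjugation that avoids the block decomposition entirely and gives sharper constants.
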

\noindent
For the proof see Appendix \ref{appVerifyAssum}.

\subsection{Continuous time}\label{algoCont}
\noindent
Now consider the continuous-time setting
\begin{align}
{\rm d}X_t &= B\left(X_t\right){\rm d}t + C{\rm d}W_t,\label{ContX}\\
{\rm d}Y_t &= HX_t{\rm d}t + \Gamma{\rm d}V_t\label{ContY}
\end{align}
where $W$, $V$ are independent Brownian motions. In this case, we analyze the corresponding filtering 
algorithm called the Ensemble Transform Kalman-Bucy Filter given by a continuous-time ensemble $X_t^{(i)}$, 
$1 \leq i \leq M$, $t \geq 0$, solving the following stochastic differential equation:
\begin{equation} 
\label{ContESRF}
{\rm d}X_t^{(i)} = B\left(X_t^{(i)}\right){\rm d}t + C{\rm d}W_t^{(i)} + P_tH^TR^{-1}\left({\rm d}Y_t
 - \frac{1}{2}H\left(X_t^{(i)} + \bar{x}_t\right){\rm d}t\right)
\end{equation}
with corresponding ensemble mean and covariance matrix
\begin{align}
\bar{x}_t &:= \frac{1}{M}\sum_{i=1}^MX_t^{(i)},\\
P_t &:= \frac{1}{M-1}\sum_{i=1}^M \left(X_t^{(i)} - \bar{x}_t\right)\left(X_t^{(i)} - \bar{x}_t\right)^T \label{PCont}
= \frac{1}{M-1}E_tE_t^T.
\end{align}
Existence of a global (strong) solution to \eqref{ContESRF} does not immediately follow from standard 
results in the theory of stochastic differential equations, since the coefficients are only locally 
Lipschitz and of cubic growth in the ensemble variables. However, it is possible to decompose  
\eqref{ContESRF} into a coupled system of stochastic differential equations for the centered particles 
$$ 
\hat{X}_t^{(i)} := X_t^{(i)} - \bar{x}_t , 1\le i\le M, 
$$  
and the empirical mean and to derive the following stochastic differential equation for $P_t$ up 
to the explosion time $\xi$ 
\begin{equation} 
\label{DynamicalEqEmpCovContESRF}
\begin{aligned} 
{\rm d} P_t  & = \frac 1{M-1} \sum_{i=1}^M \big( \left( B\left(X_t^{(i)}\right) - \bar{b}_t\right)  
\left( X_t^{(i)} - \bar{x}_t\right)^T  \\ 
& \hspace{4cm}  + \left( X_t^{(i)}- \bar{x}_t\right)\left( B\left(X_t^{(i)}\right)
- \bar{b}_t\right)^T \big) \, {\rm d}t \\ 
& \qquad + \left( Q - P_t H^T R^{-1} HP_t \right) \, {\rm d}t + {\rm d}N_t
\end{aligned} 
\end{equation} 
where $\bar{b}_t = \frac 1M \sum_{i=1}^M B\left(X_t^{(i)}\right)$ denotes the empirical mean of $B$ and 
$$ 
\begin{aligned} 
{\rm d} N_t & := \frac{1}{M-1}\sum_{i=1}^M \left(C{\rm d}\left(W_t^{(i)} - \bar{w}_t\right)\right)
\left(X_t^{(i)} - \bar{x}_t\right)^T \\ 
& \hspace{3cm}  + \left(X_t^{(i)} - \bar{x}_t\right)\left(C{\rm d}\left(W_t^{(i)} - 
\bar{w}_t\right)\right)^T  
\end{aligned} 
$$ 
is an It\^o stochastic integral, with $\bar{w}_t = \frac 1M \sum_{i=1}^M W_t^{(i)}$, hence a 
(matrix-valued) martingale w.r.t. the underlying filtration. We now obtain the following stochastic 
differential inequality for the trace 
$$ 
{\rm tr} (P_t) = \sum_{k=1}^d P_t (k,k) = \frac 1{M-1} \sum_{i=1}^M \left\|X_t^{(i)} - \bar{x}_t\right\|^2 
$$ 
of the empirical covariance matrix 
\begin{equation*} 
\begin{aligned} 
{\rm d}\, {\rm tr} (P_t)  
& \le \frac 2{M-1} \sum_{i=1}^M  \sum_{k=1}^d
 \left( B\left(X_t^{(i)}\right) - B(\bar{x}_t)\right) (k) \left( X_t^{(i)} - \bar{x}_t\right) (k) \, {\rm d}t \\ 
& \qquad + {\rm tr} (Q)\, {\rm d}t  + {\rm d}\, {\rm tr}(N_t) \\ 
& \le \left( 2\|B\|_{\rm Lip} {\rm tr} (P_t ) + {\rm tr} (Q)\right)\, {\rm d}t +  {\rm d}\, {\rm tr}(N_t),  
\end{aligned} 
\end{equation*} 
thereby using  
$$ 
\sum_{i=1}^M  \left( B\left(X_t^{(i)}\right) - \bar{b}_t\right) \left( X_t^{(i)} - \bar{x}_t\right)^T 
= \sum_{i=1}^M \left( B\left(X_t^{(i)}\right) - B(\bar{x}_t)\right)\left( X_t^{(i)} - \bar{x}_t\right)^T  . 
$$ 
The stochastic Gronwall lemma (see \cite{scheutzow2013}) now implies that 
\begin{equation} 
\label{TraceInequality} 
\E\left[ \sup_{0\le t\le T\wedge \xi} \sqrt{{\rm tr} (P_t)} \right] 
\le  \cC e^{ \|B\|_{{\rm Lip}} T} \sqrt{{\rm tr} (Q)}  . 
\end{equation} 
A similar analysis shows that 
$$
\E\left[ \sup_{0\le t\le T\wedge \xi} \sqrt{\|\bar{x}_t\|} \right] < \infty 
$$
which implies that $\P\left[ \xi \le T\right] = 0$ for all $T$. In conclusion, explosion does not occur 
in finite time a.s. which yields the a.s.-existence of a global solution of \eqref{ContESRF}.  

\medskip 
\noindent 
The ensemble $X_t^{(i)}$, $1 \leq i \leq M$, forms the so called continuous time limit of the ESRF 
algorithms presented in Section \ref{algoDiscr}, i.e. when applying the ESRF algorithms from Section 
\ref{algoDiscr} to the Euler approximation of \eqref{ContX}-\eqref{ContY} in discrete time and letting the 
discretization step converge to zero, the limiting ensemble evolves according to \eqref{ContESRF}. A 
fundamental observation in this respect is that the resulting ensemble algorithm in continuous time is the 
universal limit independent of the algorithm-specific transformation (see \cite{lange2019}). This follows 
from the fact that the transformations used in the ESRF, when applied to the Euler approximation, all agree 
up to first order w.r.t. the time-discretization.

\smallskip
\noindent
In the particular case of linear model drift $B(x) = Bx$, \eqref{ContESRF} yields the following system of evolution equations for $\bar{x}$ and $P$ in closed form:  
\begin{align}
{\rm d}\bar{x}_t &= B\bar{x}_t{\rm d}t + C{\rm d}\bar{w}_t + P_tH^TR^{-1}\left({\rm d}Y_t - H\bar{x}_t {\rm d}t\right),\\
{\rm d}P_t &= \left(BP_t + P_tB^T + Q - P_tH^TR^{-1}HP_t\right){\rm d}t + {\rm d}N_t.
\end{align}

\section{Mean field limit - discrete time}
\label{secMeanDiscr}

\noindent
We are now interested in investigating the asymptotic behaviour of the ESRF in the large ensemble limit 
$M \rightarrow \infty$. To this end assume that the initial conditions $X_0^{(i),a}$ are independent and 
identically distributed. It then follows from the representation \eqref{RepresentationTransform} that 
the joint distribution of the particles $X_k^{(i), f/a}$ is exchangeable. We can therefore expect to 
obtain a law of large numbers for mean and covariance and a propagation of chaos result. More specifically, 
the particles $X_k^{(i), f/a}$ converge to independent copies of a process $\bar{X}_k^{f/a}$ that is 
recursively defined as 
\begin{align} 
\bar{X}_k^{f} & := B\left(\bar{X}_{k-1}^{a}\right) + CW_k, \label{MeanFieldProcessF}\\
\bar{X}_k^{a} & := \bar{m}_k^a + \cT\left(\bar{P}_k^f\right)\left(\bar{X}_k^{f}  
- \bar{m}_k^f\right) \label{MeanFieldProcessA}
\end{align}
with  
\[
\bar{m}_k^a := \bar{m}_k^f + \cK\left(\bar{P}_k^{f}\right)\left( Y_k - H\bar{m}_k^f \right).\] 
Here,  
\begin{align*}
\bar{m}_k^f  & := \int x \bar{\pi}_k^f ({\rm d}x), \\
\bar{P}_k^f & := \int \left(x - \bar{m}_k^f\right)\left(x - \bar{m}_k^f\right)^T  
\bar{\pi}_k^f ({\rm d}x)
\end{align*}
denote the mean and covariance matrix of the distribution $\bar{\pi}_k^f$ of $\bar{X}_k^{f}$. The process 
$\bar{X}_k^{f/a}$ is called a mean-field process, since in the analysis step the transformation of the 
particle depends on its distribution.

\subsection{The linear model case}
\noindent
In the linear model case $B(x) = Bx$, the recursive equations \eqref{MeanFieldProcessF}-\eqref{MeanFieldProcessA} for the mean-field 
process reduce to 
\begin{align*}
\bar{X}_k^{f} & = B\bar{X}_{k-1}^{a} + CW_k, \\
\bar{X}_k^{a} & = \bar{m}_k^a + \cT\left(\bar{P}_k^f\right) \left(\bar{X}_k^{f} - \bar{m}_k^f\right), 
\end{align*} 
so that the recursive equations for the mean 
\begin{align*} 
\bar{m}_k^f &:= \mathds{E}\left[ \bar{X}_k^{f}\right] = B \mathds{E}\left[ \bar{X}_{k-1}^{a}\right] = B\bar{m}_{k-1}^a, \\
\bar{m}_k^a &:= \mathds{E}\left[ \bar{X}_k^{a}\right] = \bar{m}_k^f + \cK\left(\bar{P}_k^{f}\right)\left( Y_k - H\bar{m}_k^f \right) 
\end{align*} 
with covariance matrices
\begin{align*} 
\bar{P}_k^f & = B\bar{P}_{k-1}^a B^T + CC^T, \\ 
\bar{P}_k^a & = \cT\left(\bar{P}_k^f\right) \bar{P}_k^f\cT\left(\bar{P}_k^f\right)^T = \left({\rm Id} - \cK\left(\bar{P}_k^{f}\right) H\right)\bar{P}_k^f 
\end{align*}
coincide exactly with the recursive equations of mean and covariance of the Kalman filter. This implies in particular that for Gaussian initial distribution $\pi_0$ the distribution $\bar{\pi}_k^a$ of the mean-field process $\bar{X}_k^{a}$ coincides with the posterior distribution $\pi_k$ of $X_k$ given $Y_{1:k}$.

\subsection{Main results} 

To simplify notations in our subsequent analysis, let $\cT_k := \cT\left(P_k^{f}\right)$ 
and $\bar{\cT}_k := \cT\left(\bar{P}_k^{f}\right)$ as well as  
$\cK_k := \cK\left(P_k^{f}\right)$ and $\bar{\cK}_k := \cK\left(\bar{P}_k^{f}\right)$. Let $\bar{X}_k^{(i), f/a}$ be independent copies of the mean-field process 
$\bar{X}_k^{f/a}$ with $W_k$ replaced by $W_k^{(i)}$ in the forward step. We will show that 
\begin{equation*} 
X_k^{(i),f/a} = \bar{X}_k^{(i),f/a} + r_k^{(i),f/a}
\end{equation*}
where the residual $r_k^{(i),f/a}$ decays at rate $\frac{1}{\sqrt{M}}$ a.s. and also in $L^p$. This implies 
conversely that in the case where the mean-field process is consistent in the sense that $\bar{m}^a_k$ 
coincides with the respective posterior mean, the empirical mean $\bar{x}_k^a$ of the finite size ESRF 
provides a good approximation of the posterior mean. Moreover, our analysis of the residual $r_k^{(i), a}$ 
will provide us with explicit error estimates.\\
\noindent
We will follow closely the approach taken in \cite{leGland2009} to the mean field limit for general 
particle filters. Denote with 
\begin{equation*}
\Delta_k^{M,p,f/a} := \left(\frac{1}{M}\sum_{i=1}^M \left\|r_k^{(i),f/a}\right\|^p\right)^{\frac{1}{p}}
\end{equation*}
then our first main result is a follows:

\begin{theorem}
\label{thm:MeanFieldNonlinear}
Let $X_0^{(i),a}, i=1,...,M$, be an ensemble of i.i.d. random variables with distribution $\bar{\pi}_0$ 
having finite second moments and let $W_k^{(i)}$, $k\geq 1$, $i\geq 1$, be independent standard Gaussians. 
For $M \geq 2$, let $\left(X_k^{(i),a}\right)$, $i = 1,...,M,$ denote the ESRF filters defined by 
\eqref{AnalysisStep} - \eqref{KalmanGain} and \eqref{RepresentationTransform}, where $\cT$ 
satisfies Assumption \ref{AssumT}, initialized at $X_0^{(i),a}$. Further let $\left(\bar{X}_k^{(i),a}\right)$, $i\geq 1$, be independent copies of the mean field process 
\eqref{MeanFieldProcessF}-\eqref{MeanFieldProcessA} with $W_k$ replaced by $W_k^{(i)}$ and with initial 
condition $\bar{X}_0^{(i),a} = X_0^{(i),a}$. Then 
\begin{equation} 
\label{thm:MFNonlinear:result1}
\Delta_k^{M,p,f/a} \longrightarrow 0, M \rightarrow \infty, \mathds{P}\text{-a.s.}
\end{equation}
and for all $i= 1, 2, 3, \ldots $ 
\begin{equation} 
\label{thm:MFNonlinear:result2}
\left\|X_k^{(i),f/a} - \bar{X}_k^{(i),f/a}\right\| \longrightarrow 0, M \rightarrow \infty,  
\mathds{P}\text{-a.s.}.
\end{equation}
\end{theorem}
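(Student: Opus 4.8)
The plan is to argue by induction on the time index $k$, the induction hypothesis at level $k-1$ being that $\Delta_{k-1}^{M,2,a}\to 0$ and $\|r_{k-1}^{(i),a}\|\to 0$ for each fixed $i$, both $\mathds{P}$-a.s. Everything is read conditionally on the observation $\sigma$-algebra $\mathcal{Y}_{0:k}$, so that $\bar{m}_k^{f/a}$ and $\bar{P}_k^f$ are measurable w.r.t.\ the data and the copies $\bar{X}_k^{(i),f/a}$, being driven by the independent noises $W_k^{(i)}$ and independent initial conditions, form a conditionally i.i.d.\ family; the a.s.\ statements then follow by applying the strong law of large numbers (SLLN) for $\mathds{P}$-a.e.\ realization of the observations. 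The base case $k=0$ is trivial since $\bar{X}_0^{(i),a}=X_0^{(i),a}$.

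\emph{Forecast step.} Since the same noise drives $X_k^{(i),f}$ and $\bar{X}_k^{(i),f}$, one has $r_k^{(i),f}=B(X_{k-1}^{(i),a})-B(\bar{X}_{k-1}^{(i),a})$, whence $\|r_k^{(i),f}\|\le\|B\|_{\text{Lip}}\|r_{k-1}^{(i),a}\|$ and $\Delta_k^{M,2,f}\le\|B\|_{\text{Lip}}\Delta_{k-1}^{M,2,a}\to 0$ by hypothesis. Writing $\bar{x}_k^f-\bar{m}_k^f=\frac{1}{M}\sum_i r_k^{(i),f}+\frac{1}{M}\sum_i(\bar{X}_k^{(i),f}-\bar{m}_k^f)$, the first sum is $\le\Delta_k^{M,1,f}\le\Delta_k^{M,2,f}$ and the second converges to $0$ by the SLLN for the conditionally i.i.d.\ centered copies, so $\bar{x}_k^f\to\bar{m}_k^f$ a.s. I would then substitute $X_k^{(i),f}-\bar{x}_k^f=(\bar{X}_k^{(i),f}-\bar{m}_k^f)+\bigl(r_k^{(i),f}-(\bar{x}_k^f-\bar{m}_k^f)\bigr)$ into the definition of $P_k^f$, expand, bound the cross terms via Cauchy--Schwarz by $\bigl(\frac{1}{M-1}\sum_i\|\bar{X}_k^{(i),f}-\bar{m}_k^f\|^2\bigr)^{1/2}$ (which is a.s.\ bounded, converging to ${\rm tr}(\bar{P}_k^f)$) times $O\bigl(\Delta_k^{M,2,f}+\|\bar{x}_k^f-\bar{m}_k^f\|\bigr)$, and use the SLLN for the empirical covariance of the i.i.d.\ copies (finite second moments suffice and are propagated through the cycle by the Lipschitz drift and the Gaussian noise), to obtain $P_k^f\to\bar{P}_k^f$ a.s.

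\emph{Analysis step.} Subtracting \eqref{MeanFieldProcessA} from \eqref{AnalysisStep} and rearranging gives
\[
r_k^{(i),a}=\bigl({\rm Id}-\cK_kH-\cT_k\bigr)(\bar{x}_k^f-\bar{m}_k^f)+(\cK_k-\bar{\cK}_k)(Y_k-H\bar{m}_k^f)+\cT_k\,r_k^{(i),f}+(\cT_k-\bar{\cT}_k)(\bar{X}_k^{(i),f}-\bar{m}_k^f).
\]
Now $\cK$ is continuous on $\mathds{R}^{d\times d}_{sym,+}$ because $R\succ 0$ makes $R+HPH^T$ invertible, so $\cK_k\to\bar{\cK}_k$ a.s.; by \eqref{assumT2} together with $P_k^f\to\bar{P}_k^f$ and the resulting a.s.\ boundedness of $\|P_k^f\|$, also $\cT_k\to\bar{\cT}_k$ a.s., while $\|\cT_k\|$ is a.s.\ bounded by \eqref{assumT1}. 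Applying Minkowski's inequality to $\Delta_k^{M,2,a}$: the first term contributes $\le(1+\|\cK_k\|\,\|H\|+\|\cT_k\|)\,\|\bar{x}_k^f-\bar{m}_k^f\|\to 0$, the second $\le\|\cK_k-\bar{\cK}_k\|\,\|Y_k-H\bar{m}_k^f\|\to 0$, the third $\le\|\cT_k\|\,\Delta_k^{M,2,f}\to 0$, and the fourth $\le\|\cT_k-\bar{\cT}_k\|\,\bigl(\frac{1}{M}\sum_i\|\bar{X}_k^{(i),f}-\bar{m}_k^f\|^2\bigr)^{1/2}\to 0$. Using the single-index quantities $\|r_k^{(i),f}\|$ and $\|\bar{X}_k^{(i),f}-\bar{m}_k^f\|$ in the third and fourth terms instead gives $\|r_k^{(i),a}\|\to 0$ a.s.\ for each fixed $i$, and $\bar{x}_k^a\to\bar{m}_k^a$, $P_k^a\to\bar{P}_k^a$ follow along the way. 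This closes the induction and yields both \eqref{thm:MFNonlinear:result1} and \eqref{thm:MFNonlinear:result2}.

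The main obstacle is the empirical covariance $P_k^f$: it must be compared with the i.i.d.\ reference ensemble and all cross terms shown to vanish, and the resulting a.s.\ convergence --- together with the a.s.\ boundedness of $\|P_k^f\|$ that it provides --- must then be fed into the cubic-growth Lipschitz estimate \eqref{assumT2} for $\cT$. This interplay, plus the bookkeeping ensuring that everything is read conditionally on the data so that the reference copies are genuinely i.i.d., is where the care lies; the Lipschitz forecast step and the continuity of $\cK$ are routine. Note that only a.s.\ convergence is asserted here, so the SLLN is enough --- the $1/\sqrt{M}$ rates and $L^p$ bounds need the quantitative moment estimates of Theorem \ref{error}.
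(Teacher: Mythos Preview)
Your proposal is correct and follows essentially the same route as the paper: induction on $k$, a Lipschitz estimate for the forecast step, and for the analysis step a decomposition of $r_k^{(i),a}$ combined with the SLLN for the i.i.d.\ mean-field copies and the continuity/local Lipschitz properties of $\cK$ and $\cT$. Your single-line decomposition of $r_k^{(i),a}$ is algebraically equivalent to the paper's split into centered-particle and mean parts (their estimates \eqref{XAnaRecurs_a}--\eqref{XAnaRecurs_c}).

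The one organizational difference worth noting: the paper isolates an \emph{a priori} bound $\sup_{M\ge 2}\|P_k^f\|<\infty$ a.s.\ as a separate lemma (Lemma~\ref{PSigma}), proved directly from the recursion for $P_k^f$ without reference to the mean-field limit, and only then runs the induction. You instead obtain boundedness of $\|P_k^f\|$ as a consequence of the convergence $P_k^f\to\bar{P}_k^f$ established within the induction step. Both are valid for Theorem~\ref{thm:MeanFieldNonlinear}; the paper's route has the advantage that the $L^p$-moment bound on $P_k^f$ in Lemma~\ref{PSigma} is reusable verbatim in the quantitative Theorem~\ref{error}, whereas your convergence-implies-boundedness argument would not yield the uniform-in-$M$ moment control needed there.
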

\noindent
We will need the following well-known controls on the Kalman gain transformation. For completeness we 
included a proof in the Appendix \ref{appTK-MeanF}.

\begin{lemma}
\label{TK-MeanF}
Consider the transformation
\[ 
\cK: \mathds{R}^{d\times d}_{sym ,+} \to \mathds{R}^{d\times d} 
\]
given by
\begin{equation}
\cK(P):= PH^T\left(R + HPH^T\right)^{-1}.
\end{equation}
(recall from (\ref{KalmanGain})). Then there exist positive constants $\cC^{\cK}$ and $\cL^{\cK}$ such that 
\begin{itemize}
\item $\left\|{\rm Id} - \cK(P)H \right\| \leq \cC^{\cK}\left( 1 + \|P\|\right)$,
\item $\left\|\cK(P) - \cK(Q)\right\| \leq \cL^{\cK} \left( 1 + \|P\|\right)\left\|P-Q\right\|$,
\end{itemize}
for all $P$, $Q\in\mathds{R}^{d\times d}_{sym ,+}$.
\end{lemma}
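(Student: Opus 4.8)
The plan is to reduce both estimates to the single structural observation that the matrix $S(P) := R + HPH^T$ is uniformly bounded below. Since $P \in \mathds{R}^{d\times d}_{sym,+}$, the matrix $HPH^T$ is symmetric positive semidefinite, so $S(P) \succeq R \succ 0$ and hence $\|S(P)^{-1}\| \leq \|R^{-1}\|$ for \emph{every} admissible $P$, with a constant independent of $P$. This is the only nontrivial ingredient; everything else is bookkeeping with submultiplicativity of the spectral norm.

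For the first bound I would write ${\rm Id} - \cK(P)H = {\rm Id} - PH^T S(P)^{-1}H$ and estimate $\|\cK(P)H\| \leq \|P\|\,\|H\|\,\|S(P)^{-1}\|\,\|H\| \leq \|H\|^2\|R^{-1}\|\,\|P\|$, so that $\|{\rm Id}-\cK(P)H\| \leq 1 + \|H\|^2\|R^{-1}\|\,\|P\| \leq \cC^{\cK}(1+\|P\|)$ with $\cC^{\cK} := \max\{1,\ \|H\|^2\|R^{-1}\|\}$. (One could instead invoke the Woodbury identity ${\rm Id}-\cK(P)H = \left({\rm Id}+PH^TR^{-1}H\right)^{-1}$, but this crude estimate is already enough.)

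For the Lipschitz bound I would use the resolvent identity $S(P)^{-1} - S(Q)^{-1} = S(P)^{-1}\bigl(S(Q)-S(P)\bigr)S(Q)^{-1} = S(P)^{-1}H(Q-P)H^TS(Q)^{-1}$ and then telescope
\[
\cK(P)-\cK(Q) = PH^TS(P)^{-1} - QH^TS(Q)^{-1} = (P-Q)H^TS(Q)^{-1} + PH^TS(P)^{-1}H(Q-P)H^TS(Q)^{-1}.
\]
Applying $\|S(P)^{-1}\|, \|S(Q)^{-1}\| \leq \|R^{-1}\|$ termwise gives $\|\cK(P)-\cK(Q)\| \leq \bigl(\|H\|\,\|R^{-1}\| + \|H\|^4\|R^{-1}\|^2\|P\|\bigr)\|P-Q\| \leq \cL^{\cK}(1+\|P\|)\|P-Q\|$ with $\cL^{\cK} := \max\{\|H\|\,\|R^{-1}\|,\ \|H\|^4\|R^{-1}\|^2\}$.

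The only point that requires any care — and which I would flag as the (modest) main obstacle — is that the statement asks for the prefactor $(1+\|P\|)$ rather than $(1+\|Q\|)$: this dictates the asymmetric choice of telescoping above, in which the extra matrix factor pulled out in front is $P$, while both resolvents are kept bounded by $\|R^{-1}\|$ regardless of the argument. Writing the telescoping sum the other way round would produce $(1+\|Q\|)$ instead, which would be useless for the application of this lemma in the proof of Theorem \ref{thm:MeanFieldNonlinear}, where $P = P_k^f$ is the (random, a priori unbounded) empirical covariance and $Q = \bar{P}_k^f$ the deterministic mean-field covariance whose norm one wants to keep on the outside.
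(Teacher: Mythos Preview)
Your proof is correct and takes essentially the same route as the paper's: both rely on $\|(R+HPH^T)^{-1}\|\le\|R^{-1}\|$ and the same resolvent/telescoping identity, the only cosmetic difference being that the paper recombines your two terms into the factorization $\cK(P)-\cK(Q)=({\rm Id}-\cK(P)H)(P-Q)H^T(R+HQH^T)^{-1}$ and thereby recycles the first bound directly. Two minor remarks: the exponent in your second term should be $\|H\|^3$ rather than $\|H\|^4$, and your closing discussion of the asymmetry is muddled --- in the application in Theorem~\ref{thm:MeanFieldNonlinear} the prefactor that actually appears is $(1+\|P_k^f\|)$ with the \emph{random} empirical covariance (controlled a.s.\ via Lemma~\ref{PSigma}), so either choice of telescoping would in fact serve.
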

\begin{remark}
In the case of nonlinear observations, the Kalman gain transformation no longer acts on the covariance matrix but directly on the ensemble. Hence we may not expect to obtain the same estimates as above simplifying our proceeding analysis.
\end{remark}
\noindent
Furthermore we will make use of
\begin{lemma}
\label{PSigma}
It holds
\begin{itemize}
\item $\sup_{ M \geq 2} \left\|P_k^{f}\right\| < \infty$ $\mathds{P}\text{-a.s.}$,
\item $\left\|P_k^{f} - \bar{P}_k^{f,M}\right\|  
\leq 2\sqrt{\frac{M}{M-1}}\left(\left(\text{tr}\left(P_k^{f}\right)\right)^{\frac{1}{2}} + \left(\text{tr}\left(\bar{P}_k^{f,M}\right)\right)^{\frac{1}{2}}\right)\Delta_k^{M,2,f}$. 
\end{itemize}
Moreover, if $\left\|P_0^{a}\right\|_p := \mathds{E}\left[\left\|P_0^{a}\right\|^p\right]^{\frac{1}{p}} < \infty$, then
\[\sup_{M \geq 2}\left\|P_k^{f}\right\|_p < \infty.\]
\end{lemma}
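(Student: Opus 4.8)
The plan is to prove all three assertions by a joint induction on $k$, propagating a bound through the forecast step \eqref{ForwardStep} and the analysis step \eqref{AnalysisStep}, and using repeatedly two elementary facts: for any finite family $(v_i)_{i=1}^M$ in $\mathds{R}^d$ the map $y\mapsto \sum_{i=1}^M\|v_i-y\|^2$ is minimized at the empirical mean $\bar v=\frac1M\sum_i v_i$, and for $P\in\mathds{R}^{d\times d}_{sym,+}$ one has $\|P\|\le{\rm tr}(P)\le d\|P\|$ with ${\rm tr}(\cdot)$ monotone in the Loewner order.

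I would dispatch the second (non-asymptotic) estimate first, as it is purely algebraic. Set $a_i:=X_k^{(i),f}-\bar x_k^f$ and $b_i:=\bar X_k^{(i),f}-\bar m_k^{f,M}$, so that $a_i-b_i=r_k^{(i),f}-\bar r_k^f$ with $\bar r_k^f:=\frac1M\sum_{i=1}^M r_k^{(i),f}$, and
\[
P_k^f-\bar P_k^{f,M}=\frac{1}{M-1}\sum_{i=1}^M\left(a_i(a_i-b_i)^T+(a_i-b_i)b_i^T\right).
\]
The triangle inequality for the spectral norm, the identity $\|uv^T\|=\|u\|\,\|v\|$, and Cauchy--Schwarz bound the right-hand side by $\frac{1}{M-1}\big((\sum_i\|a_i\|^2)^{1/2}+(\sum_i\|b_i\|^2)^{1/2}\big)(\sum_i\|a_i-b_i\|^2)^{1/2}$; since $\frac1{M-1}\sum_i\|a_i\|^2={\rm tr}(P_k^f)$, $\frac1{M-1}\sum_i\|b_i\|^2={\rm tr}(\bar P_k^{f,M})$, and $\sum_i\|a_i-b_i\|^2\le\sum_i\|r_k^{(i),f}\|^2=M\,(\Delta_k^{M,2,f})^2$ by minimality of the mean, the stated bound with constant $2\sqrt{M/(M-1)}$ follows.

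For the first and third assertions the key is the forecast-step recursion
\[
{\rm tr}(P_k^f)\le 2\|B\|_{\rm Lip}^2\,{\rm tr}(P_{k-1}^a)+2\,{\rm tr}(S_k^M),
\]
where $S_k^M$ is the empirical covariance matrix of the i.i.d.\ Gaussians $CW_k^{(i)}$: this comes from writing $X_k^{(i),f}-\bar x_k^f$ as the sum of the mean-centerings of $B(X_{k-1}^{(i),a})$ and of $CW_k^{(i)}$, applying $\|u+v\|^2\le2\|u\|^2+2\|v\|^2$, replacing the center of the first family by $B(\bar x_{k-1}^a)$ (which only enlarges the sum), and using the Lipschitz bound on $B$. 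For the analysis step, \eqref{eq:consistency} gives $P_k^a=({\rm Id}-\cK_k H)P_k^f\preceq P_k^f$, hence ${\rm tr}(P_k^a)\le{\rm tr}(P_k^f)$, closing the loop between $k-1$ and $k$. Assertion~1 then follows by induction: the base case uses the strong law of large numbers ${\rm tr}(P_0^a)\to{\rm tr}(\bar P_0^a)$ a.s.\ (finite second moments of $\bar\pi_0$) and ${\rm tr}(S_k^M)\to{\rm tr}(Q)$ a.s., a convergent sequence in $M$ being bounded; intersecting the countably many probability-one events over $k$ yields $\sup_{M\ge2}\|P_k^f\|\le\sup_{M\ge2}{\rm tr}(P_k^f)<\infty$ a.s. For assertion~3 ($p\ge1$) I would run the same recursion in $L^p$ via Minkowski's inequality, together with the uniform bound $\|{\rm tr}(S_k^M)\|_p\le 2\,\E[\|CW_k\|^{2p}]^{1/p}<\infty$ (Jensen: $(\frac1M\sum_i\xi_i)^p\le\frac1M\sum_i\xi_i^p$ for i.i.d.\ nonnegative $\xi_i$, and $\frac1{M-1}\le\frac2M$ for $M\ge2$) and the base case $\|{\rm tr}(P_0^a)\|_p\le d\,\|P_0^a\|_p<\infty$.

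All of this is routine; the only points requiring care are getting the constant in the second estimate exactly right — which rests on identifying $\frac1{M-1}\sum_i\|a_i\|^2$ as ${\rm tr}(P_k^f)$ and on the fact that re-centering the residuals at their empirical mean only decreases their sum of squares — and checking that the forecast-to-analysis-to-forecast recursion genuinely closes, i.e.\ that neither step introduces new $M$-dependent constants. Neither should present a real obstacle.
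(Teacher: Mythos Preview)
Your proposal is correct and follows essentially the same route as the paper. For the second assertion both you and the paper write $P_k^f-\bar P_k^{f,M}$ as a sum of rank-one terms and apply Cauchy--Schwarz; your use of the mean-minimality $\sum_i\|r_k^{(i),f}-\bar r_k^f\|^2\le\sum_i\|r_k^{(i),f}\|^2$ is a slight sharpening (the paper instead bounds $\|r_k^{(i),f}-\bar r_k^f\|^2\le 2\|r_k^{(i),f}\|^2+2\|\bar r_k^f\|^2$, which is where its extra factor $2$ comes from). For the first and third assertions the paper derives the same forecast recursion and closes it with $P_k^a\preceq P_k^f$, but works in the Loewner order rather than with traces, converting via ${\rm tr}(P){\rm Id}\le dP$ and picking up a factor $d$ in the constant; your trace-only argument is cleaner and gives better constants but is otherwise the same idea.
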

\noindent
For the proof see Appendix \ref{appPSigma}.\\

\begin{proof} (of Theorem \ref{thm:MeanFieldNonlinear}) 
We first consider $M$ fixed. In the forecast step, we obtain the following estimate
\begin{equation}\label{DeltaFRecurs}
\begin{aligned}
\Delta_k^{M,p,f} & = \left(\frac{1}{M}\sum_{i=1}^M \left\|X_k^{(i),f} - \bar{X}_k^{(i),f}  
\right\|^p\right)^{\frac{1}{p}}\\
& = \left(\frac{1}{M}\sum_{i=1}^M \left\|B\left(X_{k-1}^{(i),a}\right) - B\left(\bar{X}_{k-1}^{(i),a}
\right)\right\|^p\right)^{\frac{1}{p}} \leq \|B\|_{\text{Lip}}\Delta_{k-1}^{M,p,a}.
\end{aligned}
\end{equation}
For the analysis step let us first introduce the centered particles 
$$ 
\hat{X}_k^{(i),f/a} := X_k^{(i),f/a} - \bar{x}_k^{(i),f/a} , \qquad 
\hat{\bar{X}}_k^{(i),f/a} := \bar{X}_k^{(i),f/a} - \bar{m}_k^{f/a} . 
$$ 
Then 
\begin{equation}
\label{XAnaRecurs_a}
\begin{aligned}
& \left\|\hat{X}_k^{(i),a} - \hat{\bar{X}}_k^{(i),a}\right\| 
  = \left\|\cT_k \hat{X}_k^{(i),f} - \bar{\cT}_k\hat{\bar{X}}_k^{(i),f}\right\| \\ 
& \hspace{1cm}\le \left\|\cT_k \left(\hat{X}_k^{(i),f} - \hat{\bar{X}}_k^{(i),f}\right) \right\|  
 + \left\|\left( \cT_k - \bar{\cT}_k\right) \hat{\bar{X}}_k^{(i),f}\right\| \\ 
& \hspace{1cm}\le \cC^{\cT}\left( 1 + \left\|P_k^f\right\|\right) 
\left\|\hat{X}_k^{(i),f} - \hat{\bar{X}}_k^{(i),f}\right\|  \\ 
& \hspace{1cm}\qquad + \cL^{\cT}\left( 1 + \left\|P_k^{f}\right\|^2 + \left\|\bar{P}_k^{f}\right\|^2\right) 
\left\|P_k^f - \bar{P}_k^f\right\| \left\|\hat{\bar{X}}_k^{(i),f}\right\|.  
\end{aligned}
\end{equation}
Similarly, 
\begin{equation}
\label{XAnaRecurs_b}
\begin{aligned}
\left\|\bar{x}_k^{a} - \bar{m}_k^{a}\right\| 
& = \left\|\left({\rm Id} - \cK_k H\right) \bar{x}_k^{f} - 
\left({\rm Id} - \bar{\cK}_k H\right) \bar{m}_k^{f}  
+ \left(\cK_k - \bar{\cK}_k  \right) Y_k\right\| \\ 
& \le \left\|\left({\rm Id} - \cK_k H\right) \left( \bar{x}_k^{f} - \bar{m}_k^{f}\right)\right\|   
+ \left\| \left(\cK_k - \bar{\cK}_k  \right) \left( Y_k - H\bar{m}_k^f\right)\right\| \\ 
& \le \cC^{\cK}\left( 1 + \left\|P_k^f\right\|\right)  \left\|\bar{x}_k^{f} - \bar{m}_k^{f}\right\| \\
& \qquad + \cL^{\cK}\left( 1 + \left\|P_k^f\right\|\right) \left\|P_k^f - \bar{P}_k^f\right\|   
  \left\| Y_k - H\bar{m}_k^f\right\|, 
\end{aligned}
\end{equation}
thereby using \eqref{TK-MeanF}. 
Combining \eqref{XAnaRecurs_a} and \eqref{XAnaRecurs_b} we obtain that 
\begin{equation} 
\label{XAnaRecurs_c}
\begin{aligned}
& \left\|X_k^{(i),a} - \bar{X}_k^{(i),a}\right\| 
\le \left\|\hat{X}_k^{(i),a} - \hat{\bar{X}}_k^{(i),a}\right\| 
 + \left\|\bar{x}_k^{a} - \bar{m}_k^{a}\right\| \\ 
 & \hspace{0.5cm} \le \cC^{\cT}\left( 1 + \left\|P_k^f \right\|\right) 
\left\|X_k^{(i),f} - \bar{X}_k^{(i),f}\right\| \\ 
&  \hspace{0.5cm}\qquad + \left( \cC^{\cT} + \cC^{\cK}\right) \left( 1 + \left\|P_k^f \right\|\right)  
\left\|\bar{x}_k^{f} - \bar{m}_k^{f}\right\| \\ 
&  \hspace{0.5cm}\qquad + \Big( \cL^{\cT} \left( 1 + \left\|P_k^f \right\|^2 + \left\|\bar{P}_k^f\right\|^2\right) 
\left\|\hat{\bar{X}}_k^{(i),f}\right\|  \\ 
&  \hspace{4.5cm} + \cL^{\cK} \left( 1 + \left\|P_k^f\right\|\right) \left\| Y_k - H\bar{m}_k^f\right\| \Big)    
\left\|P_k^f - \bar{P}_k^f\right\|.
\end{aligned} 
\end{equation} 
Let
$\bar{m}_k^{f,M}$ (resp. $\bar{P}_k^{f,M}$) denote empirical mean (resp. empirical covariance matrix) of 
$\bar{X}_k^{(i),f}$, $1 \leq i \leq M$. Then using 
\begin{equation*} 
\begin{aligned} 
\left\|\bar{x}_k^{f} - \bar{m}_k^{f}\right\| 
&\leq \left\|\bar{x}_k^{f} - \bar{m}_k^{f,M}\right\| + \left\|\bar{m}_k^{f,M} - \bar{m}_k^{f}\right\| \\
& \leq \left(\frac{1}{M}\sum_{i=1}^M \left\|X_k^{(i),f} - \bar{X}_k^{(i),f}\right\|^p\right)^{\frac{1}{p}}  
  + \left\|\bar{m}_k^{f,M} - \bar{m}_k^{f}\right\| \\ 
\text{ and } \left\|P_k^f - \bar{P}_k^f\right\| 
& \leq \left\|P_k^f - \bar{P}_k^{f,M}\right\| + \left\|\bar{P}_k^{f,M} - \bar{P}_k^f\right\| 
\end{aligned} 
\end{equation*}
yields  
\begin{equation} 
\label{DeltaARecurs}
\begin{aligned}
\Delta_k^{M,p,a}  
& \leq \left( 2\cC^{\cT} + \cC^{\cK}\right) \left( 1 + \left\|P_k^f\right\|\right) \Delta_k^{M,p,f}\\
& \hspace{0.5cm} +\left(  \cC^{\cT} + \cC^{\cK}\right) \left( 1 +  \left\|P_k^f\right\| \right)\left\|\bar{m}_k^{f,M} - \bar{m}_k^{f}\right\|\\
& \hspace{0.5cm} + \Big( \cL^{\cT} \left( 1 + \left\|P_k^f \right\|^2 + \left\|\bar{P}_k^f\right\|^2\right) 
\left( \frac 1M \sum_{i=1}^M \left\|\hat{\bar{X}}_k^{(i),f}\right\|^p \right)^{\frac 1p} \\ 
& \hspace{3cm} + \cL^{\cK} \left( 1 + \left\|P_k^f\right\|\right) \left\| Y_k - H\bar{m}_k^f\right\| \Big)\left\|P_k^f - \bar{P}_k^f\right\|.
\end{aligned}  
\end{equation}
In the case $p=2$, an application of Lemma \ref{PSigma} to $\left\|P_k^f - \bar{P}_k^{f,M}\right\|$ 
results in an estimate of the following form:
\begin{equation}
\label{DeltaARecurs2}
\Delta_k^{M,2,a} \leq C_k^{(1)} \Delta_{k-1}^{M,2,a} + C_k^{(2)} \left\|\bar{m}_k^{f,M}  
- \bar{m}_k^{f}\right\| + C_k^{(3)}\left\|\bar{P}_k^{f,M} - \bar{P}_k^f\right\|. 
\end{equation}
Using independence of the mean field particles $\bar{X}_k^{(i), f}$ we obtain
\begin{equation*}
\lim_{M\to\infty} \frac{1}{M}\sum_{i=1}^M\left\|\bar{X}_k^{(i),f}- \bar{m}_k^f \right\|^2 
= \text{tr}\left(\bar{P}_k^{f}\right) \hspace{0.5cm} \mathds{P}\text{-a.s.}.  
\end{equation*}
This, together with Lemma \ref{PSigma}, implies that limes superior for every coefficient in 
\eqref{DeltaARecurs2} is a.s.-bounded (in $M$). We also have the strong law of large numbers for 
empirical mean and covariance matrix of the mean field particles.  

\noindent 
For $p=2$, \eqref{thm:MFNonlinear:result1} therefore follows by simple induction w.r.t. $k$, the case $k=0$ being trivial, 
since $X_0^{(i),a} = \bar{X}_0^{(i),a}$. In particular, for all $k$,  
\begin{equation}\label{momentConv}
\bar{x}_k^f \rightarrow \bar{m}_k^f , \hspace{0.5cm} \left\|P_k^f - \bar{P}_k^f\right\| \rightarrow 0  
\hspace{0.5cm} \mathds{P}\text{-a.s.}.
\end{equation}
\eqref{thm:MFNonlinear:result2} follows from \eqref{XAnaRecurs_c} with a similar recursion. \\
In the case of $p>2$, now use \eqref{momentConv} in \eqref{DeltaARecurs} and proceed similarly which concludes the proof.
\end{proof}

\medskip
\noindent
Allowing for higher-order moments of the initial distribution in the above setting, we may even extend the result from Theorem \ref{thm:MeanFieldNonlinear} to hold in $L^p$. In order to prove this, we will need the following result:

\begin{lemma}
\label{LpBar}
If the initial distribution has finite moment of order $p$ for some $p \geq 2$, then the random vectors $\bar{X}_k^{(i),f/a}$ have finite moments of the same order.
\end{lemma}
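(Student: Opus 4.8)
The plan is to argue by induction on the cycle index $k$, following the recursive structure of the mean-field process \eqref{MeanFieldProcessF}--\eqref{MeanFieldProcessA}. For $k=0$ the claim is exactly the hypothesis that $\bar\pi_0$ has finite moment of order $p$. So suppose $\E\big[\|\bar X_{k-1}^{(i),a}\|^p\big]<\infty$; I must show the same bound for $\bar X_k^{(i),f}$ and then for $\bar X_k^{(i),a}$. The forecast step is the easy half: since $B$ is globally Lipschitz (hence of linear growth) and $W_k$ is Gaussian (all moments finite),
\[
\E\big[\|\bar X_k^{(i),f}\|^p\big] \le 2^{p-1}\Big(\E\big[\|B(\bar X_{k-1}^{(i),a})\|^p\big] + \|C\|^p\,\E\big[\|W_k\|^p\big]\Big) < \infty,
\]
using $\|B(x)\|\le \|B(0)\|+\|B\|_{\mathrm{Lip}}\|x\|$ and the induction hypothesis. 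In particular the mean $\bar m_k^f=\E[\bar X_k^{(i),f}]$ and covariance $\bar P_k^f$ are finite, and by Jensen $\|\bar m_k^f\|\le \E[\|\bar X_k^{(i),f}\|]$ and $\|\bar P_k^f\|\le \E[\|\bar X_k^{(i),f}-\bar m_k^f\|^2]$ are controlled by the same $p$-th moment (note $p\ge 2$).

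For the analysis step, write $\bar X_k^{(i),a} = \bar m_k^a + \bar\cT_k\,(\bar X_k^{(i),f}-\bar m_k^f)$ with $\bar\cT_k=\cT(\bar P_k^f)$ and $\bar m_k^a=\bar m_k^f+\bar\cK_k(Y_k-H\bar m_k^f)$. The deterministic quantities $\bar m_k^a$, $\bar\cK_k$, $\bar\cT_k$ are all \emph{finite constants} (given the observation path $Y$), because $\bar P_k^f$ is a fixed finite matrix, $\|\bar\cT_k\|\le\cC^\cT(1+\|\bar P_k^f\|)$ by \eqref{assumT1}, and $\|\bar\cK_k\|$ is bounded via Lemma \ref{TK-MeanF}. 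Hence, taking $p$-th moments,
\[
\E\big[\|\bar X_k^{(i),a}\|^p\big] \le 2^{p-1}\Big(\|\bar m_k^a\|^p + \|\bar\cT_k\|^p\,\E\big[\|\bar X_k^{(i),f}-\bar m_k^f\|^p\big]\Big),
\]
and the right-hand side is finite by the forecast-step bound just established together with $\|\bar X_k^{(i),f}-\bar m_k^f\|^p\le 2^{p-1}(\|\bar X_k^{(i),f}\|^p+\|\bar m_k^f\|^p)$. This closes the induction.

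I expect no serious obstacle here: the key structural feature making the argument routine is that in the mean-field process the transformation $\bar\cT_k$ and the gain $\bar\cK_k$ depend only on the \emph{deterministic} covariance $\bar P_k^f$ (and on the observation, which is fixed), so there is no cubic-in-the-random-variable nonlinearity of the kind that complicates the finite-ensemble SDE \eqref{ContESRF} or the estimates in Lemma \ref{PSigma}. The only mild point to be careful about is that one should record a quantitative recursion $\E[\|\bar X_k^{(i),f/a}\|^p]\le C_k$ where $C_k$ depends on $k$, $p$, $\|B\|_{\mathrm{Lip}}$, $\|B(0)\|$, $\|C\|$, the observation values $Y_1,\dots,Y_k$, and the constants $\cC^\cT,\cC^\cK$ from Assumption \ref{AssumT} and Lemma \ref{TK-MeanF} — this explicit form is what the proof of the $L^p$ extension of Theorem \ref{thm:MeanFieldNonlinear} (i.e.\ Theorem \ref{error}) will subsequently use.
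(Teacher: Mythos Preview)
Your proof is correct and follows essentially the same route as the paper's: induction on $k$, the forecast bound via the linear growth of $B$ and Gaussian moments of $W_k$, and the analysis bound via the decomposition $\bar X_k^{(i),a}=\bar m_k^a+\bar\cT_k(\bar X_k^{(i),f}-\bar m_k^f)$ together with the observation that $\bar m_k^a$, $\bar\cT_k$, $\bar\cK_k$ are deterministic (given $Y$) and controlled by Assumption~\ref{AssumT} and Lemma~\ref{TK-MeanF}. If anything, your write-up is slightly more explicit about why $\bar P_k^f$ is finite and about the constants entering the recursion.
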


\noindent
For the proof see Appendix \ref{appLpBar}. Denote
\begin{equation*}
D_k^{M,p,f/a} := \mathds{E}\left[\left|\Delta_k^{M,p,f/a}\right|^p\right]^{\frac{1}{p}}
\end{equation*}
then $L^p$-convergence holds in the following sense:

\begin{theorem} 
\label{error}
Assume that the initial distribution admits any moment of order $p \geq 2$. Then it holds
\begin{equation}
\sup_{M \geq 2} \sqrt{M} D_k^{M,p,f/a} < \infty
\end{equation}
for all $p \geq 2$.
\end{theorem}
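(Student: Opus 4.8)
The plan is to prove the bound by induction on $k$, carrying the statement simultaneously for \emph{all} exponents $p\ge 2$ — this is possible precisely because every moment of the initial law is assumed finite. Since the forecast recursion \eqref{DeltaFRecurs} gives $D_k^{M,p,f}\le\|B\|_{\text{Lip}}\,D_{k-1}^{M,p,a}$ for every $p$, it suffices to run the inductive step through the analysis update; the base case $k=0$ is trivial because $X_0^{(i),a}=\bar{X}_0^{(i),a}$. The one structural fact to keep in mind throughout is that $\bar{P}_k^f$ and $\bar{m}_k^f$ are \emph{deterministic} and that the mean-field particles $\bar{X}_k^{(i),f}$ are genuinely i.i.d.; consequently $\|P_k^f\|$ (by Lemma \ref{PSigma}), $\|\bar{P}_k^{f,M}\|\le\tfrac{4}{M-1}\sum_i\|\bar{X}_k^{(i),f}\|^2$ (by Lemma \ref{LpBar}), $\tfrac1M\sum_i\|\hat{\bar{X}}_k^{(i),f}\|^p$ (by Jensen and Lemma \ref{LpBar}) and $\|Y_k-H\bar{m}_k^f\|$ all have moments of every order bounded uniformly in $M$.

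For the inductive step, start from \eqref{DeltaARecurs}, raise both sides to the power $p$ (using $\|x+y+z\|^p\le 3^{p-1}(\|x\|^p+\|y\|^p+\|z\|^p)$ and similar elementary inequalities), and then split $\|P_k^f-\bar{P}_k^f\|$: by the triangle inequality, Lemma \ref{PSigma}, and the power-mean bound $\Delta_k^{M,2,f}\le\Delta_k^{M,p,f}$,
\[
\|P_k^f-\bar{P}_k^f\|\le C\Bigl(\|P_k^f\|^{1/2}+\|\bar{P}_k^{f,M}\|^{1/2}\Bigr)\Delta_k^{M,p,f}+\|\bar{P}_k^{f,M}-\bar{P}_k^f\| .
\]
After expanding, $\bigl(\Delta_k^{M,p,a}\bigr)^p$ is bounded by a finite sum of products, and in every such product at least one factor is one of the three ``error'' quantities $\bigl(\Delta_k^{M,p,f}\bigr)^p$, $\|\bar{m}_k^{f,M}-\bar{m}_k^f\|^p$, or $\|\bar{P}_k^{f,M}-\bar{P}_k^f\|^p$, while all the remaining factors are (powers of) the uniformly $L^q$-bounded quantities listed above together with deterministic constants.

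Now take expectations and apply H\"older's inequality in each product, isolating the error factor. It remains to check that each error factor is $O(M^{-p/2})$ in \emph{every} $L^q$. For $\bigl(\Delta_k^{M,p,f}\bigr)^p=\tfrac1M\sum_i\|r_k^{(i),f}\|^p$, Jensen and exchangeability give $\mathds{E}\bigl[\bigl((\Delta_k^{M,p,f})^p\bigr)^q\bigr]\le\mathds{E}\bigl[\|r_k^{(1),f}\|^{pq}\bigr]=\bigl(D_k^{M,pq,f}\bigr)^{pq}\le\|B\|_{\text{Lip}}^{pq}\bigl(D_{k-1}^{M,pq,a}\bigr)^{pq}$, which is $O(M^{-pq/2})$ by the inductive hypothesis applied at exponent $pq$. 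For $\|\bar{m}_k^{f,M}-\bar{m}_k^f\|$ and $\|\bar{P}_k^{f,M}-\bar{P}_k^f\|$, the rate $M^{-1/2}$ in every $L^q$ is the Marcinkiewicz--Zygmund (or Rosenthal) inequality for sums of i.i.d. centred variables, applied to $\bar{X}_k^{(i),f}-\bar{m}_k^f$ and, entrywise, to $\bigl(\bar{X}_k^{(i),f}-\bar{m}_k^f\bigr)\bigl(\bar{X}_k^{(i),f}-\bar{m}_k^f\bigr)^T-\bar{P}_k^f$; the required higher moments of $\bar{X}_k^{(i),f}$ come from Lemma \ref{LpBar}. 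Combining the estimates gives $\mathds{E}\bigl[(\Delta_k^{M,p,a})^p\bigr]\le C_{k,p}\,M^{-p/2}$, hence $\sup_M\sqrt{M}\,D_k^{M,p,a}<\infty$, and then $\sup_M\sqrt{M}\,D_{k+1}^{M,p,f}<\infty$ via \eqref{DeltaFRecurs}, closing the induction.

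The main obstacle is the inflation of the exponent: H\"older forces one to estimate $\bigl(\Delta_k^{M,p,f}\bigr)^p$ in $L^q$ rather than merely in expectation, so the recursion at step $k$ and exponent $p$ calls on the inductive hypothesis at step $k-1$ and a strictly larger exponent $pq$. Therefore the induction cannot be organized on $p$ for fixed $k$; it must be organized on $k$ with the statement \emph{for all $p$}, which the finite-all-moments hypothesis makes legitimate. The only other nontrivial input is that one needs quantitative ($L^q$, rate $M^{-1/2}$) laws of large numbers for the empirical mean \emph{and} the empirical covariance of the mean-field ensemble — the almost sure convergence used in the proof of Theorem \ref{thm:MeanFieldNonlinear} does not suffice — which is exactly what Lemma \ref{LpBar} and the Marcinkiewicz--Zygmund inequality provide.
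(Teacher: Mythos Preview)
Your proposal is correct and follows essentially the same approach as the paper: both run induction on $k$ simultaneously for all $p\ge 2$, start from \eqref{DeltaARecurs}, separate error factors from moment-bounded factors via H\"older (inflating the exponent, $p\to 4p$ in the paper, $p\to pq$ in your version), control $\|P_k^f-\bar{P}_k^{f,M}\|$ through Lemma \ref{PSigma}, and use an $L^p$-law of large numbers (the paper cites Theorem~5.2 in \cite{kwiatkowski2015}, you invoke Marcinkiewicz--Zygmund/Rosenthal) for $\bar{m}_k^{f,M}-\bar{m}_k^f$ and $\bar{P}_k^{f,M}-\bar{P}_k^f$. Your explicit use of Jensen plus exchangeability to pass from $\mathds{E}\bigl[(\Delta_k^{M,p,f})^{pq}\bigr]$ to $(D_k^{M,pq,f})^{pq}$ makes transparent a step the paper handles more tersely, but the substance is identical.
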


\begin{proof}
As in \cite{leGland2009}, we prove this statement for all $p$ via induction over $k$. It holds trivially 
in the case of $k=0$ since $\Delta_0^{M,p,f/a} = 0$.

\noindent 
Using \eqref{DeltaFRecurs} in the forecast step, we obtain the estimate
\begin{equation*}
D_k^{M,p,f} \leq \|B\|_{\text{Lip}}D_{k-1}^{M,p,a}
\end{equation*}
and thus the assertion for $D_k^{M,p,f}$ with all $p$. 

\noindent
In the analysis step, we proceed from estimate \eqref{DeltaARecurs}. Taking expectation we obtain that 
\begin{equation}
\label{thm3.5:eq1}  
\begin{aligned} 
D_k^{M,p,a} 
& \le \left( 2\cC^{\cT} + \cC^{\cK}\right) \mathds{E}\left[   \left( 1 + \left\|P_k^f\right\|\right)^p  
 \left( \Delta_k^{M,p,f}\right)^p \right]^{\frac 1p} \\ 
& \hspace{0.5cm} + \left( \cC^{\cT} + \cC^{\cK}\right) \mathds{E} \left[  \left( 1 +  \left\|P_k^f\right\| \right)^p  
 \left\|\bar{m}_k^{f,M} - \bar{m}_k^{f}\right\|^p \right]^{\frac 1p} \\   
& \hspace{0.5cm} +\mathds{E} \Big[ \big( \cL^{\cT}\left( 1 + \left\|P_k^f \right\|^2 + \|\bar{P}_k^f\|^2\right) 
\left( \frac 1M \sum_{i=1}^M \left\|\hat{\bar{X}}_k^{(i),f}\right\|^p \right)^{\frac 1p} \\ 
& \hspace{2cm} + \cL^{\cK} \left( 1+ \left\|P_k^f\right\|\right) \left\| Y_k - H\bar{m}_k^f\right\| \big)^p \left\|P_k^f - \bar{P}_k^f\right\|^p 
  \Big]^{\frac 1p} \\ 
& =: (I) + (II) + (III), \text{ say.} 
\end{aligned}
\end{equation}
We now estimate the three terms separately. Clearly, 
\begin{equation} 
\label{eq:estI} 
\begin{aligned} 
(I) \le \left( 2\cC^{\cT} + \cC^{\cK}\right)\mathds{E}\left[ \left( 1 + \left\|P_k^f\right\|\right)^{2p} 
  \right]^{\frac 1{2p}}  D_k^{M, 2p, f} . 
\end{aligned} 
\end{equation}  
For the estimate of the next term we apply the $L^p$-law of large numbers  
(see Theorem 5.2 in \cite{kwiatkowski2015}) to get for all $\bar{p}\in [2, \infty)$ that
\begin{equation*}
\mathds{E}\left[\left\|\bar{m}_k^{f,M}-\bar{m}_k^{f}\right\|^{\bar{p}} \right]^{\frac{1}{\bar{p}}}  
= \left\|\frac{1}{M}\sum_{i=1}^M \bar{X}_k^{(i),f} - \bar{m}_k^{f}\right\|_{\bar{p}} 
\leq \frac{C_{\bar{p}}}{\sqrt{M}}\left\|\bar{X}_k^{(1),f}\right\|_{\bar{p}} 
\end{equation*}
and thus  
\begin{equation} 
\label{eq:estII} 
\begin{aligned} 
(II) & \le  \left( \cC^{\cT} + \cC^{\cK}\right) \mathds{E} \left[  \left( 1 +  \left\|P_k^f\right\| \right)^{2p}   
\right]^{\frac 1{2p}} \mathds{E} \left[ \left\|\bar{m}_k^{f,M} - \bar{m}_k^{f}\right\|^{2p} 
\right]^{\frac 1{2p}}  \\ 
& \le  \left( \cC^{\cT} + \cC^{\cK}\right)\mathds{E} \left[  \left( 1 +  \left\|P_k^f\right\| \right)^{2p}   
\right]^{\frac 1{2p}} \frac{C_{2p}}{\sqrt{M}} . 
\end{aligned} 
\end{equation}  
Finally, for the third term we can estimate similarly
\begin{equation} 
\label{eq:estIII} 
\begin{aligned} 
(III) & \le \mathds{E} \Big[ \big( \cL^{\cT} \left( 1 + \left\|P_k^f\right\|^2 + \left\|\bar{P}_k^f\right\|^2\right) 
\left( \frac 1M \sum_{i=1}^M \left\|\hat{\bar{X}}_k^{(i),f}\right\|^{p} \right)^{\frac 1p} \\ 
& \hspace{1cm} + \cL^{\cK}\left( 1 + \left\|P_k^f\right\|\right) \left\| Y_k - H\bar{m}_k^f\right\| \big)^{2p} \Big]^{\frac 1{2p}}
\mathds{E} \Big[ \left\|P_k^f - \bar{P}_k^f\right\|^{2p} \Big]^{\frac 1{2p}}. 
\end{aligned} 
\end{equation}  
Using Lemma \ref{PSigma} and the $L^p$-law of large numbers again, we can further estimate 
\begin{equation} 
\label{eq:estIIIa} 
\begin{aligned} 
\mathds{E} & \Big[ \left\|P_k^f - \bar{P}_k^f\right\|^{2p} \Big]^{\frac 1{2p}} 
\le \mathds{E} \Big[ \left\|P_k^f - \bar{P}_k^{f,M}\right\|^{2p} \Big]^{\frac 1{2p}} 
 + \mathds{E} \Big[ \left\|\bar{P}_k^{f,M} - \bar{P}_k^f\right\|^{2p} \Big]^{\frac 1{2p}} \\ 
& \qquad \le  2\sqrt{\frac{M}{M-1}}\mathds{E} \Big[ \left(\left(\text{tr}\left(P_k^{f}\right)\right)^{\frac{1}{2}} 
 + \left(\text{tr}\left(\bar{P}_k^{f,M}\right)\right)^{\frac{1}{2}}\right)^{2p} \Delta_k^{M,2p,f}   
   \Big]^{\frac 1{2p}} \\
& \qquad\qquad\qquad  + \frac{C_{2p}}{\sqrt{M}} \left\|\hat{\bar{X}}_k^{(1),f} \left(\hat{\bar{X}}_k^{(1),f}\right)^T \right\|_{2p}  
\end{aligned} 
\end{equation} 
and  
\begin{equation} 
\label{eq:estIIIaa} 
\begin{aligned} 
\mathds{E} & \Big[ \left(\left(\text{tr}\left(P_k^{f}\right)\right)^{\frac{1}{2}} 
 + \left(\text{tr}\left(\bar{P}_k^{f,M}\right)\right)^{\frac{1}{2}}\right)^{2p} 
   \left( \Delta_k^{M,2,f} \right)^{2p}  \Big]^{\frac 1{2p}} \\ 
& \le \mathds{E} \Big[ \left(\left(\text{tr}\left(P_k^{f}\right)\right)^{\frac{1}{2}} 
 + \left(\text{tr}\left(\bar{P}_k^{f,M}\right)\right)^{\frac{1}{2}}\right)^{4p} \Big]^{\frac 1{4p}}   
 D_k^{M,4p,f}.   
\end{aligned} 
\end{equation} 
Summarizing, we obtain an upper bound of the form  
\begin{equation} 
\label{thm3.5:eq2}
\begin{aligned} 
D_k^{M,p,a} \le \cC_k^{(1)} D_k^{M,4p,f} 
+ \frac{\cC_k^{(2)}}{\sqrt{M}} \left( \left\|\bar{X}_k^{(1),f}\right\|_{2p} 
+ \left\|\hat{\bar{X}}_k^{(1),f} \left(\hat{\bar{X}}_k^{(1),f}\right)^T \right\|_{2p} \right)  
\end{aligned}
\end{equation}  
with finite constants $\cC_k^{(i)}$ independent of $M$. It follows that the assertion holds 
for $D_k^{M,p,a}$ with all $p$, which concludes the induction step and proves the theorem. 
\end{proof}

\section{Mean field limit - continuous time}
\label{secMeanCont}

\noindent
Recall from Section \ref{algoCont} the continuous-time ESRF ensemble
\begin{equation}
\label{ESRF-nonl}
{\rm d}X_t^{(i)} = B\left(X_t^{(i)}\right){\rm d}t + C{\rm d}W_t^{(i)}  
+ P_t^MH^TR^{-1}\left({\rm d}Y_t - \frac{1}{2}H\left(X_t^{(i)} + \bar{x}_t\right){\rm d}t\right)
\end{equation}
for $1 \leq i \leq M$, where $P_t^M$ is defined by \eqref{PCont}. Provided, the algorithm is initialized with independent and identically 
distributed random variables, we can again expect a law of large numbers for mean and covariance 
and obtain convergence towards independent copies of the solution to the following stochastic 
differential equation 
\begin{equation}
\label{MF-nonl}
{\rm d}\bar{X}_t = B\left(\bar{X}_t\right){\rm d}t + C{\rm d}W_t + \bar{P}_tH^TR^{-1}\left({\rm d}Y_t  
- \frac{1}{2}H\left(\bar{X}_t + \bar{m}_t\right){\rm d}t\right)
\end{equation}
where
\begin{align}
\bar{m}_t & := \int x \bar{\pi}_t({\rm d}x),\\
\bar{P}_t & := \int \left( x  - \bar{m}_t\right)\left(x - \bar{m}_t\right)^T \bar{\pi}_t({\rm d}x)
\end{align}
with $\bar{\pi}_t := \mathds{P}^{-1} \circ \bar{X}_t$. 
\begin{remark}
Similar to the discrete case, $\bar{X}_t$ is called a mean-field process and similar to the case of 
\eqref{ESRF-nonl}, the existence of a (strong) solution to \eqref{MF-nonl} is not immediate. In the linear case provided the initial condition is Gaussian, $\bar{X}$ will remain a Gaussian process with finite first and second moment. The extension to the nonlinear case, however, requires an a priori estimate on $\bar{P}_t$. This is an open 
problem in the literature, since the stopping time argument given in the case of \eqref{ESRF-nonl} is not applicable to the mean-field case. We leave this problem open and rather assume from now on the existence of strong solutions to the respective mean-field equations. 
\end{remark}
\noindent 
Then we can show the following:
\begin{theorem}
\label{resultMeanCont}
Let $X_0^{(i)}$, $i= 1, \ldots , M$, be i.i.d. with distribution $\bar{\pi}_0$ having finite second 
moments and let $W_t^{(i)}$, $i\geq 1$, be independent Brownian motions. Let $X_t^{(i)}$, $i = 1,..., M$,  
denote the strong solution to (\ref{ESRF-nonl}) and suppose that there exists strong solutions 
$\bar{X}_t^{(i)}$, $i \geq 1$, to (\ref{MF-nonl}) with $W_t$ replaced by $W_t^{(i)}$, both initialized  
at $X_0^{(i)}$. Then 
\begin{equation}
X_t^{(i)} = \bar{X}_t^{(i)} + r_t^{(i)}
\end{equation}
where the residual $r_t^{(i)}$ satisfies almost surely with respect to the distribution of $Y$
\begin{equation}
\sup_{t\in [0,T]} \frac{1}{M}\sum_{i=1}^M \left\|r_t^{(i)}\right\|^2 \rightarrow 0, M \rightarrow \infty
\end{equation}
in probability for all $T\ge 0$.
\end{theorem}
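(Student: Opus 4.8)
The plan is to derive a stochastic differential inequality for the averaged squared residual $\rho_t^M := \frac 1M\sum_{i=1}^M\|r_t^{(i)}\|^2$, where $r_t^{(i)} := X_t^{(i)} - \bar X_t^{(i)}$, and to close it by a localized Gronwall argument, the fluctuations being handled by a functional law of large numbers for the i.i.d.\ mean-field copies $\bar X_t^{(i)}$. Subtracting \eqref{MF-nonl} from \eqref{ESRF-nonl}, the common noise $C{\rm d}W_t^{(i)}$ cancels, and writing $P_t^M = \bar P_t + (P_t^M - \bar P_t)$ in the Kalman correction gives
\[
\begin{aligned}
{\rm d}r_t^{(i)} &= \bigl( B(X_t^{(i)}) - B(\bar X_t^{(i)})\bigr)\,{\rm d}t - \tfrac 12\bar P_tH^TR^{-1}Hr_t^{(i)}\,{\rm d}t - \tfrac 12\bar P_tH^TR^{-1}H(\bar x_t - \bar m_t)\,{\rm d}t \\
&\qquad + (P_t^M - \bar P_t)H^TR^{-1}\Bigl( {\rm d}Y_t - \tfrac 12H(X_t^{(i)} + \bar x_t)\,{\rm d}t\Bigr).
\end{aligned}
\]
Using ${\rm d}Y_t = HX_t\,{\rm d}t + \Gamma\,{\rm d}V_t$ and $\Gamma\Gamma^T = R$, It\^o's formula for $\|r_t^{(i)}\|^2$ produces, besides $2\langle r_t^{(i)},{\rm d}r_t^{(i)}\rangle$, a quadratic variation term bounded by $C\|P_t^M-\bar P_t\|^2\,{\rm d}t$. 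Summing over $i$, dividing by $M$, using $\|B\|_{\rm Lip}$ and Cauchy--Schwarz in $i$, the drift of $\rho_t^M$ is bounded by $\bigl( a_t^M\rho_t^M + b_t^M(\|P_t^M-\bar P_t\| + \|\bar x_t - \bar m_t\|)(1+\rho_t^M)\bigr)\,{\rm d}t$ plus the increment of a martingale $m_t^M := \int_0^t(\beta_s^M)^T{\rm d}V_s$ with $\|\beta_s^M\| \le C\|P_s^M-\bar P_s\|\sqrt{\rho_s^M}$, where $a_t^M$ collects $\|B\|_{\rm Lip}$ and $\|\bar P_t\|$, and $b_t^M$ collects $\|\bar P_t\|$, $\|X_t\|$, $\|\bar x_t\|$ and $\bigl(\frac 1M\sum_i\|\bar X_t^{(i)}\|^2\bigr)^{1/2}$. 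Note $\rho_0^M = 0$ since $\bar X_0^{(i)} = X_0^{(i)}$.

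Next I would eliminate the differences of empirical quantities. Writing $\bar m_t^M$, $\bar P_t^M$ for the empirical mean and covariance of $\bar X_t^{(1)},\dots,\bar X_t^{(M)}$, one has $\bar x_t - \bar m_t^M = \frac 1M\sum_i r_t^{(i)}$, hence $\|\bar x_t - \bar m_t\| \le \sqrt{\rho_t^M} + \|\bar m_t^M - \bar m_t\|$, while the purely algebraic estimate in Lemma \ref{PSigma} gives $\|P_t^M - \bar P_t^M\| \le 2\sqrt{\frac M{M-1}}\bigl(({\rm tr}\,P_t^M)^{1/2} + ({\rm tr}\,\bar P_t^M)^{1/2}\bigr)\sqrt{\rho_t^M}$ and thus $\|P_t^M - \bar P_t\| \le C\bigl(({\rm tr}\,P_t^M)^{1/2} + ({\rm tr}\,\bar P_t^M)^{1/2}\bigr)\sqrt{\rho_t^M} + \|\bar P_t^M - \bar P_t\|$. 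Substituting, the surviving "genuinely small" contributions reduce to the fluctuation $\varepsilon_t^M := \|\bar m_t^M - \bar m_t\| + \|\bar P_t^M - \bar P_t\|$, and I would then localize: let $\tau_K$ be the first time one of ${\rm tr}\,P_t^M$, ${\rm tr}\,\bar P_t^M$, ${\rm tr}\,\bar P_t$, $\|\bar x_t\|$, $\|\bar m_t^M\|$, $\|\bar m_t\|$, $\|X_t\|$ exceeds $K$. On $[0,T\wedge\tau_K]$ all the coefficients above are bounded by some $C(K,T)$, one has $\rho_t^M \le C(K)$ and $\varepsilon_t^M \le C(K)$, and $\langle m^M\rangle_{t\wedge\tau_K} \le C(K)\int_0^{t\wedge\tau_K}\rho_s^M\,{\rm d}s$. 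Taking expectations (the stopped martingale has mean zero) and applying Gronwall's lemma gives $\mathds{E}\bigl[\rho_{t\wedge\tau_K}^M\bigr] \le C(K,T)\,\mathds{E}\bigl[\sup_{s\le T}\varepsilon_s^M\wedge C(K)\bigr]$ for $t\le T$; since $\sup_{s\le T}\varepsilon_s^M\to 0$ $\mathds{P}$-a.s.\ by the functional strong law of large numbers for the i.i.d.\ continuous processes $\bar X^{(i)}$ (using the a priori bound $\mathds{E}[\sup_{t\le T}\|\bar X_t^{(1)}\|^2]<\infty$ for the mean-field process), bounded convergence forces this right-hand side, and hence $\mathds{E}[\int_0^{T\wedge\tau_K}\rho_s^M\,{\rm d}s]$, to $0$. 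By Doob's $L^2$-inequality the stopped martingale $m^M$ then tends to $0$ uniformly on $[0,T\wedge\tau_K]$ in probability, and feeding this back into the stochastic differential inequality yields $\sup_{t\le T\wedge\tau_K}\rho_t^M\to 0$ in probability for each fixed $K$.

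Finally I would remove the localization. All the estimates above may be carried out conditionally on the observation path $Y$, since conditionally on $Y$ the $\bar X_t^{(i)}$ are i.i.d.\ and the a priori bounds persist; this is exactly what delivers the statement $\mathds{P}$-a.s.\ with respect to the law of $Y$. From \eqref{TraceInequality}, the analogous a priori bound for $\bar x_t$, the exchangeability bound $\mathds{E}[\sup_{t\le T}{\rm tr}\,\bar P_t^M]\le C\,\mathds{E}[\sup_{t\le T}\|\bar X_t^{(1)}\|^2]$ (and likewise for $\bar m_t^M$), the assumed second-moment bound for the mean-field process, and the a.s.\ finiteness of $\sup_{t\le T}\|X_t\|$, one obtains $\mathds{P}(\tau_K < T) \le C/\sqrt K$ uniformly in $M$; letting $M\to\infty$ and then $K\to\infty$ in $\mathds{P}(\sup_{t\le T}\rho_t^M>\delta)\le \mathds{P}(\tau_K<T) + \mathds{P}(\sup_{t\le T\wedge\tau_K}\rho_t^M>\delta)$ proves the claim. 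I expect the main obstacle to be that the coefficients of \eqref{ESRF-nonl} and \eqref{MF-nonl} grow cubically in the ensemble variables, so that no global Gronwall estimate is available and the whole argument hinges on the localization being harmless; this in turn rests both on the uniform-in-$M$ control of $\mathds{P}(\tau_K<T)$ furnished by the trace estimate \eqref{TraceInequality} and on a priori moment bounds for the mean-field process itself — precisely the point the paper has to assume, since existence of mean-field solutions with such bounds is left open.
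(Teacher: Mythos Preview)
Your proposal is correct and follows essentially the same strategy as the paper: derive a Gronwall-type inequality for the averaged squared residual, localize via stopping times on the empirical covariances (and related quantities), reduce the remaining error to the fluctuations $\|\bar m_s^M-\bar m_s\|$ and $\|\bar P_s^M-\bar P_s\|$ of the i.i.d.\ mean-field copies via the algebraic estimate of Lemma~\ref{PSigma}, and then remove the localization using the uniform-in-$M$ trace control \eqref{TraceInequality}. The only minor differences are that the paper writes $r_t^{(i)}$ in integral form and squares via Cauchy--Schwarz (so Doob's inequality is applied to the stochastic integral $\int_0^t(P_s-\bar P_s)H^TR^{-1}\Gamma\,{\rm d}V_s$ directly rather than to your martingale $m_t^M$), and it controls the fluctuation terms by their pointwise second moments---which yields the explicit $1/M$ rate recorded in Remark~\ref{remark:thm4_1}---rather than invoking a functional strong law of large numbers plus bounded convergence.
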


\begin{remark} 
\label{remark:thm4_1} 
It is also possible to obtain error estimates similar to the time discrete case, but only up to certain 
stopping times. More specifically, let 
\begin{align}
\label{empiricalMFProcess} 
\bar{m}_t^{M} &:= \frac{1}{M}\sum_{i=1}^M \bar{X}_t^{(i)},\\
\bar{P}_t^{M} &:= \frac{1}{M-1}\sum_{i=1}^M \left(\bar{X}_t^{(i)} - \bar{m}_t^{M}\right) 
\left(\bar{X}_t^{(i)} - \bar{m}_t^{M}\right)^T
\end{align}
be the empirical mean and covariance of the mean field ensemble $\bar{X}_t^{(i)}$, $1\le i\le M$. 
Then 
\begin{equation}
\label{ErrorEstimates} 
\sup_{M \geq 2} \sqrt{M}\mathds{E}\left[\sup_{t \in [0, T \wedge \theta_n^M \wedge \bar{\theta}_n^{M}]} 
\frac{1}{M}\sum_{i=1}^M \left\|r_t^{(i)}\right\|^2\right]^{\frac{1}{2}} < \infty
\end{equation}
w.r.t. stopping times
\begin{align}
\theta_n^M & := \inf\left\{t \ge 0: {\rm tr}\left(P_t^M\right) \ge n\right\}, \label{StoppingTime1}\\
\bar{\theta}_n^M & := \inf\left\{ t \ge 0 : {\rm tr}\left(\bar{P}_t^M\right) \ge n\right\}. \label{StoppingTime2} 
\end{align}
Moreover, 
\begin{align}
& \limsup_{n \rightarrow \infty} \sup_{M \ge 2} 
\mathds{P}\left[\theta_n^M \leq T\right] = 0,
\label{Stop1}\\
& \limsup_{n \rightarrow \infty} \sup_{M \ge 2} 
\mathds{P}\left[\bar{\theta}_n^M 
\leq T\right] = 0. 
\label{Stop2}
\end{align}
\eqref{ErrorEstimates} follows from \eqref{ErrEst8} below, \eqref{StoppingTime1} from 
\eqref{TraceInequality}, since the constant $\cC$ in this inequality is independent of $M$ and 
$\xi = \infty$, and \eqref{StoppingTime2} follows from the same analysis leading to 
\eqref{TraceInequality}, but with $X_t^{(i)}$ replaced by the ensemble members $\bar{X}_t^{(i)}$ 
of the mean field process.  
\end{remark}

\noindent
To simplify the notation, we will write $P_t$ instead of $P_t^M$ in the following analysis.

\begin{proof} (of Theorem \ref{resultMeanCont})
The residual $r_t^{(i)}$ evolves according to
\begin{align*}
r_t^{(i)} & = \int_0^{t} B\left(X_s^{(i)}\right) - B\left(\bar{X}_s^{(i)}\right) \, {\rm d}s \\
& \qquad  + \int_0^t \left(P_s - \bar{P}_s\right)H^T R^{-1} \left( {\rm d}Y_s  
- \frac 12 H\left( \bar{X}_s^{(i)} + \bar{m}_s\right)\, {\rm d}s \right) \\ 
& \qquad - \frac 12 \int_0^t P_s H^T R^{-1}H\left( X_s^{(i)} - \bar{X}_s^{(i)} 
 + \left(\bar{x}_s - \bar{m}_s\right) \right) {\rm d}s , 
\end{align*}
so that, using  
\begin{equation*}
{\rm d}Y_t = HX_t^{\text{ref}}{\rm d}t + \Gamma{\rm d}V_t
\end{equation*}
where $X^{\text{ref}}$ denotes the reference trajectory generating $Y$,  
\begin{equation} 
\label{ErrEst1}
\begin{aligned}
\frac 1M \sum_{i=1}^M & \left\|r_t^{(i)}\right\|^2 \le t \int_0^{t} \left( 4\|B\|_{{\rm Lip}}^2 
 + 2\left\|P_s H^T R^{-1}H \right\|^2 \right) \left( \frac 1M \sum_{i=1}^M \left\|r_s^{(i)}\right\|^2 
 \right) \, {\rm d}s \\
&  + 8\left\| \int_0^t \left( P_s - \bar{P}_s \right) H^T R^{-1}\Gamma\,  {\rm d}V_s \right\|^2 \\ 
& + 2t\int_0^t \left\| H^T R^{-1}H \right\|^2 \left( \frac 1M \sum_{i=1}^M \left\|\bar{X}_s^{(i)}\right\|^2 
 + \left\|X_s^{\rm ref}\right\|^2 \right) \left\| P_s - \bar{P}_s \right\|^2 \, {\rm d}s  \\ 
&  + 2t\int_0^t \left\|P_s H^T R^{-1}H \right\|^2 \left\|\bar{x}_s - \bar{m}_s\right\|^2\,  {\rm d}s . 
\end{aligned}
\end{equation} 
Introducing the process  
\begin{equation} 
\label{CoefficientL} 
\begin{aligned}
\mathcal{L}_t^{(1)} & := \|B\|_{{\rm Lip}}^2  +  \left\|H^T R^{-1}H\right\|^2 \Big( \left\|P_t\right\|^2 + \frac 1M \sum_{i=1}^M \left\|\bar{X}_t^{(i)}\right\|^2 + \left\|X_t^{\rm ref}\right\|^2 \Big)
\end{aligned} 
\end{equation} 
we may estimate the third and forth summand under the integral by
\[\mathcal{L}_s^{(1)}\left(\left\|\bar{x}_s - \bar{m}_s\right\|^2   
 + \left\|P_s - \bar{P}_s\right\|^2\right)\]
and obtain
\begin{equation} 
\label{ErrEst2} 
\begin{aligned}
\frac 1M \sum_{i=1}^M \left\|r_t^{(i)}\right\|^2 
& \lesssim t\int_0^t \mathcal{L}_s^{(1)} \left( \frac 1M \sum_{i=1}^M \left\|r_s^{(i)}\right\|^2  
 + \left\|\bar{x}_s - \bar{m}_s\right\|^2   
 + \left\|P_s - \bar{P}_s\right\|^2 \right) \, {\rm d}s  \\
& \qquad + \left\| \int_0^t \left( P_s - \bar{P}_s \right) H^T R^{-1}\Gamma\,  {\rm d}V_s  
\right\|^2 . 
\end{aligned} 
\end{equation} 
We now consider the stopping time $\zeta_n := \theta^M_n \wedge\bar{\theta}^M_n$, where $\theta^M_n$ and 
$\bar{\theta}^M_n$ are defined in \eqref{StoppingTime1} and \eqref{StoppingTime2}. Taking supremum over $s\le t\wedge\zeta_n$ and then expectation w.r.t. the Brownian motion $V$ in (4.16) we obtain
\begin{equation}  
\label{ErrEst3}
\begin{aligned}
\E \left[ \sup_{s\le t\wedge\zeta_n} 
\frac 1M \sum_{i=1}^M \|r_s^{(i)}\|^2 \right] 
& \lesssim \E \Big[ t\int_0^{t\wedge\zeta_n} \mathcal{L}_s^{(1)} \big( \sup_{u\le s\wedge \zeta_n} 
\frac 1M \sum_{i=1}^M \|r_u^{(i)}\|^2    \\ 
& \qquad\qquad + \left\|\bar{x}_s - \bar{m}_s\right\|^2  
+ \left\|P_s - \bar{P}_s\right\|^2 \big)\, {\rm d}s \Big]   \\ 
& \quad + \E \left[ \sup_{s\le t\wedge \zeta_n} 
\left\| \int_0^s \left( P_u - \bar{P}_u \right) H^T R^{-1}\Gamma\,  {\rm d}V_u \right\|^2  \right] . 
\end{aligned}
\end{equation} 
Applying Doob's maximal inequality to the It\^o stochastic integral yields 
\begin{equation} 
\label{DoobsInequality} 
\begin{aligned}
\mathds{E} & \Big[ \sup_{s \le t \wedge \zeta_n} 
 \left\|\int_0^s \left(P_u - \bar{P}_u\right)H^TR^{-1}\Gamma{\rm d}V_u\right\|^2\Big] \\ 
& \qquad\le 4 \mathds{E} \left[\int_0^{t \wedge \zeta_n}  
{\rm tr} \left( (P_s - \bar{P}_s )H^TR^{-1} H(P_s - \bar{P}_s)\right)\, {\rm d}s\right] \\ 
&  \qquad \le  4\text{tr}\left(H^TR^{-1}H\right) \mathds{E} \left[ \int_0^{t\wedge \zeta_n}  
\left\|P_s - \bar{P}_s\right\|^2  {\rm d}s   \right].
\end{aligned}
\end{equation} 
Inserting \eqref{DoobsInequality} into \eqref{ErrEst3} yields 
\begin{equation}  
\label{ErrEst4}
\begin{aligned}
\E \left[ \sup_{s\le t\wedge\zeta_n} 
\frac 1M \sum_{i=1}^M \left\|r_s^{(i)}\right\|^2 \right] 
& \lesssim \E \Big[ \int_0^{t\wedge\zeta_n} \mathcal{L}_s^{(2)} \big( \sup_{u\le s\wedge \zeta_n} 
\frac 1M \sum_{i=1}^M \left\|r_u^{(i)}\right\|^2    \\ 
& \qquad \qquad + \left\|\bar{x}_s - \bar{m}_s\right\|^2  
+ \left\|P_s  - \bar{P}_s\right\|^2 \big)\, {\rm d}s \Big]  
\end{aligned}
\end{equation} 
with $\mathcal{L}_s^{(2)} := t\mathcal{L}_s^{(1)} + \text{tr}\left(H^TR^{-1}H\right)$. 

\medskip 
\noindent 
Note that 
\begin{equation} 
\label{ErrEst5} 
\begin{aligned}
\left\|\bar{x}_t - \bar{m}_t\right\| 
& \leq \left\|\bar{x}_t - \bar{m}_t^{M}\right\| + \left\|\bar{m}_t^{M} - \bar{m}_t\right\|\\
&  \leq \frac{1}{M} \sum_{i=1}^M \left\|X_t^{(i)} - \bar{X}_t^{(i)}\right\|  
+ \left\|\bar{m}_t^{M} - \bar{m}_t\right\| \\ 
& = \frac{1}{M} \sum_{i=1}^M \left\|r_t^{(i)}\right\| + \left\|\bar{m}_t^{M} - \bar{m}_t\right\|
\end{aligned} 
\end{equation} 
and, using as before the superscript $\hat{\ }$ to denote centered ensemble members, 
\begin{equation} 
\label{ErrEst6} 
\begin{aligned}
\left\|P_t - \bar{P}_t\right\| 
& \le \left\|P_t - \bar{P}^M_t\right\| + \left\|\bar{P}^M_t - \bar{P}_t\right\| \\ 
& \le \frac 1{M-1} \left\|\sum_{i=1}^M  \left( \hat{X}_t^{(i)}- \hat{\bar{X}}_t^{(i)} \right) 
\left(\hat{X}_t^{(i)}\right)^T  + \hat{\bar{X}}_t^{(i)} \left( \hat{X}_t^{(i)} 
- \hat{\bar{X}}^{(i)} \right)^T \right\| \\ 
& \qquad + \left\|\bar{P}^M_t - \bar{P}_t\right\| \\ 
& \le \frac 2{M-1} \sum_{i=1}^M \left\|r_t^{(i)}\right\| \left( \left\|\hat{X}_t^{(i)}\right\| + \left\|\hat{\bar{X}}_t^{(i)}\right\|\right) 
+ \left\|\bar{P}^M_t - \bar{P}_t\right\| \\ 
& \le 2\left( \frac 1{M-1} \sum_{i=1}^M \left\|r_t^{(i)}\right\|^2\right)^{\frac 12} 
\left( {\rm tr}(P_t) + {\rm tr}\left(\bar{P}_t^M\right)\right)^{\frac 12}  
+ \left\|\bar{P}^M_t - \bar{P}_t\right\| .   
\end{aligned} 
\end{equation} 
Inserting \eqref{ErrEst5} and \eqref{ErrEst6} into \eqref{ErrEst4} we obtain that
\begin{equation}  
\label{ErrEst7}
\begin{aligned}
\E \left[ \sup_{s\le t\wedge\zeta_n} 
\frac 1M \sum_{i=1}^M \left\|r_s^{(i)}\right\|^2 \right] 
& \lesssim \E \Big[ \int_0^{t\wedge\zeta_n} \mathcal{L}_s^{(3)} \Big( \sup_{u\le s\wedge \zeta_n} 
\frac 1M \sum_{i=1}^M \left\|r_u^{(i)}\right\|^2    \\ 
& \qquad \qquad + \left\|\bar{m}^M_s - \bar{m}_s\right\|^2  
+ \left\|\bar{P}^M_s  - \bar{P}_s\right\|^2 \Big)\, {\rm d}s \Big]  
\end{aligned}
\end{equation} 
with $\mathcal{L}_s^{(3)} := 4 \mathcal{L}_s^{(2)} \left( 1 +  {\rm tr} (P_s) + {\rm tr}\left(\bar{P}_s^M\right)\right)$. Now,   
$\mathcal{L}_s^{(3)}\le \cC n^2$ for some uniform constant independent of $n$ and $M$ and $s\le\zeta_n$, so that   
\eqref{ErrEst7} together with Gronwall's lemma yields that 
\begin{equation}  
\label{ErrEst8}
\begin{aligned}
\E & \left[ \sup_{s\le t\wedge\zeta_n} 
\frac 1M \sum_{i=1}^M \left\|r_s^{(i)}\right\|^2 \right] \\
& \qquad \le \cC n^2 \int_0^t e^{\cC n^2 s}\E \left[ \left\|\bar{m}^M_s - \bar{m}_s\right\|^2  
+ \left\|\bar{P}^M_s  - \bar{P}_s\right\|^2 \right] \, {\rm d}s \\ 
& \qquad = \cC n^2 \int_0^t e^{\cC n^2 s} \frac 1M \left( \left\|\bar{m}_s\right\|^2 + \left\|\bar{P}_s\right\|^2 \right) 
\, {\rm d}s \rightarrow 0 \, , M\to\infty  
\end{aligned}
\end{equation} 
due to the independence of the ensemble members of the mean field process. 
\end{proof}

\subsection{The linear model case}

\noindent
In the linear case $B(x) = Bx$, $\bar{m}_t$ and $\bar{P}_t$ evolve according to
\begin{align*}
{\rm d}\bar{m}_t &= B\bar{m}_t{\rm d}t + \bar{P}_tH^TR^{-1}\left({\rm d}Y_t - H\bar{m}_t{\rm d}t\right),\\
\frac{{\rm d}}{{\rm d}t} \bar{P}_t &= B\bar{P}_t + \bar{P}_tB^T + Q - \bar{P}_tH^TR^{-1}H\bar{P}_t
\end{align*}
which coincide with the Kalman-Bucy filtering equations for mean and covariance of the posterior 
distribution $\pi_t$ in the case of Gaussian initial conditions. Hence in this case Theorem 
\ref{resultMeanCont} in particular implies convergence of 
\begin{equation} 
\label{MFLimitMeanCovariance} 
\begin{aligned}
\sup_{0 \leq t \leq T} \left\|P_t - \bar{P}_t\right\| \longrightarrow 0, M \rightarrow \infty,\\
\sup_{0 \leq t \leq T} \left\|\bar{x}_t - \bar{m}_t\right\| \longrightarrow 0, M \rightarrow \infty
\end{aligned}
\end{equation} 
in probability, almost sure with respect to the distribution of $Y$, and thus in particular 
asymptotic consistency of the ESRF-algorithms. The asymptotic fluctuations in the convergence 
\eqref{MFLimitMeanCovariance} are extensively studied in the papers \cite{abishop2019}, 
\cite{abishop2019b}, and \cite{abishop2020} as summarized in the recent review paper \cite{abishop2020b}. 
There the authors provide $L^p$-estimates for the fluctuations of both empirical mean and covariance matrix 
of the continuous-time ESRF for arbitrary $M \geq 1$ around their respective mean field counterpart, and, 
as one would expect, again obtain a fluctuation rate of $\frac{1}{\sqrt{M}}$ (e.g. see Theorem 5.4 and 
Theorem 5.6 in \cite{abishop2020b}). 

\subsection{Associated stochastic partial differential equation}
\noindent 
Consider again the mean field limit
\begin{equation*}
{\rm d}\bar{X}_t = B\left(\bar{X}_t\right){\rm d}t + C{\rm d}W_t + \bar{P}_tH^TR^{-1}\left({\rm d}Y_t  
- \frac{1}{2}H\left(\bar{X}_t + \bar{m}_t\right){\rm d}t\right)
\end{equation*}
with $\bar{m}_t$ and $\bar{P}_t$ the mean vector and covariance matrix of $\bar{\pi}_t = \mathds{P} \circ  
\bar{X}_t^{-1}$ in the general nonlinear case. Let $\varphi \in C^2_0$ be a twice continuously 
differentiable function with compact 
support and let $\nabla\varphi$ (resp. $\varphi^{\prime\prime}$) be the gradient (resp. the Hessian)  
of $\varphi$. Applying It\^{o}'s formula yields 
\begin{align*}
{\rm d}\varphi & \left(\bar{X}_t\right) 
  = \nabla \varphi\left(\bar{X}_t\right){\rm d}\bar{X}_t 
 + \frac{1}{2} {\rm tr} \left( \varphi^{\prime\prime}\left(\bar{X}_t\right) {\rm d} 
  \left\langle \bar{X}, \bar{X}\right\rangle_t \right) \\
& = \nabla \varphi\left(\bar{X}_t\right)C{\rm d}W_t + L\varphi\left(\bar{X}_t\right){\rm d}t 
 + \frac{1}{2} {\rm tr} \left(\bar{P}_tH^TR^{-1}H\bar{P}_t \varphi^{\prime\prime} 
  \left(\bar{X}_t\right) \right) {\rm d}t \\
& \hspace{0.5cm} + \nabla \varphi\left(\bar{X}_t\right) \bar{P}_tH^TR^{-1}\left({\rm d}Y_t  
- \frac{1}{2}H\left(\bar{X}_t + \bar{m}_t\right){\rm d}t\right)
\end{align*}
where
\begin{equation*}
Lf(x) = \frac{1}{2}{\rm tr} \left(Qf^{\prime\prime}\right)(x) + B(x) \cdot \nabla f(x) 
\end{equation*}
and where we used
\begin{equation*}
{\rm d} \left\langle \bar{X}, \bar{X}\right\rangle_t 
= {\rm d}\left\langle \bar{P}_t H^T R^{-1} Y, \bar{P}_t H^T R^{-1}Y\right\rangle_t  
= \bar{P}_t H^T R^{-1} R R^{-1} H \bar{P}_t {\rm d}t.
\end{equation*}
Taking expectations yields the following mean-field stochastic partial differential equation  
\begin{equation}
\label{eq:mfSPDE} 
\begin{aligned} 
{\rm d}\int \varphi {\rm d}\bar{\pi}_t 
& = \int L\varphi {\rm d}\bar{\pi}_t {\rm d}t 
  + \frac{1}{2} \int {\rm tr} \left(\bar{P}_tH^TR^{-1}H\bar{P}_t \varphi^{\prime\prime} \right) 
    {\rm d}\bar{\pi}_t {\rm d}t \\
& \hspace{0.5cm} + \int \nabla \varphi {\rm d}\bar{\pi}_t \, \bar{P}_t H^TR^{-1} 
  \left({\rm d}Y_t - H\bar{m}_t{\rm d}t\right)\\
& \hspace{0.5cm} - \frac{1}{2} \int \nabla \varphi\,  \bar{P}_t H^TR^{-1}H\left(x - \bar{m}_t\right) 
{\rm d} \bar{\pi}_t {\rm d}t\\
&=: (I) + (II) + (III) + (IV)
\end{aligned}
\end{equation} 
which does not coincide with the Kushner-Stratonovich equation driving the posterior distribution. 
Having $\bar{\pi}_t = \mathcal{N}\left(\bar{m}_t, \bar{P}_t\right)$, hence $\nabla \bar{\pi}_t = 
- \bar{P}_t^{-1} (x- \bar{m}_t ) \bar{\pi}_t$, this however yields after partial integration
\begin{align*}
(II) 
& = \frac{1}{2}\sum_{k,l} \int \left(\bar{P}_t H^TR^{-1}H\bar{P}_t\right)_{k,l} 
\partial_{k,l} \varphi\,  {\rm d}\bar{\pi}_t\\
& = \frac{1}{2} \sum_{k,l} \int \left(\bar{P}_t H^TR^{-1}H\bar{P}_t\right)_{k,l} \partial_k  
   \varphi \left(\bar{P}_t^{-1}(x-\bar{m}_t)\right)_l {\rm d}\bar{\pi}_t\\
& = \frac{1}{2}\int \nabla\varphi \, \bar{P}_tH^TR^{-1}H  \left( x-\bar{m}_t\right) {\rm d}\bar{\pi}_t 
  = - (IV), 
\end{align*}
thus second and forth term cancel. Further
\begin{align*}
(III) 
& = \int \nabla\varphi {\rm d}\bar{\pi}_t\, \bar{P}_tH^TR^{-1} 
    \left({\rm d}Y_t - H\bar{m}_t{\rm d}t\right) \\
& = \int \varphi \langle \bar{P}_t^{-1}\left( x-\bar{m}_t \right) , \bar{P}_tH^TR^{-1}  
    \left({\rm d}Y_t - H\bar{m}_t{\rm d}t\right)\rangle {\rm d}\bar{\pi}_t \\
& = \int \varphi H(x-\bar{m}_t)^T {\rm d}\bar{\pi}_t R^{-1}\left({\rm d}Y_t - H\bar{m}_t{\rm d}t\right)
\end{align*}
yielding the innovation term in the Kushner-Stratonovich equation. 

\noindent 
Therefore, in the Gaussian case and after partial integration with respect to the distribution  
$\bar{\pi}_t$ of $\bar{X}_t$ we do obtain the Kushner-Stratonovich equation
\begin{equation}
\label{eq:KS} 
{\rm d}\int \varphi {\rm d}\bar{\pi}_t = \int L\varphi {\rm d}\bar{\pi}_t 
+ \int \varphi H(x-\bar{m}_t)^T {\rm d}\bar{\pi}_t R^{-1}\left({\rm d}Y_t - H\bar{m}_t{\rm d}t\right) 
\end{equation}
or in other words, in the linear case with Gaussian initial conditions, the solutions of the two 
stochastic partial differential equations \eqref{eq:mfSPDE} and \eqref{eq:KS} coincide.

\section{Conclusion and Outlook}
\noindent
In the setting of nonlinear, Gaussian systems given by linear measurements with Gaussian measurement error, we investigated convergence of the Ensemble Square Root filtering algorithms in the limit of increasing ensemble size both in discrete-time and continuous-time. We provide propagation of chaos results together with corresponding convergence rates and discuss consistency of the resulting mean field processes which is satisfied in the linear and Gaussian case.\\
Especially in the discrete-time setting, the presented analysis generates further open problems: as we already pointed out in Remark \ref{remarkNonUn}, the update equation for the empirical covariance matrix leaves room for various possible choices for the transformation of the centered ensemble members. In fact, the transformations we investigated in this paper are not unique in the sense that any orthogonal transformation $\mathcal{U}_k$ yields another valid transformation $\cT\left(P_k^{f}\right)E_k^{f}\mathcal{U}_k$  provided it is mean-preserving (see Remark \ref{remarkNonUn} for references).\\
Furthermore note that the our analysis covers adjustment filters and consequently carries over to their corresponding transform filters via the adjoint property discussed earlier. This identification, though analytically valid, may subsequently dismiss numerical properties of the transport filtering algorithms. The ETKF is mostly preferred over its ESRF counterparts due to its numerical performance: in fact, observe that the ETKF transformation $T_k$ is of order $M \times M$ thus in the regime of much smaller ensemble sizes than the state space dimension $d$, its computation is more feasible as opposed to the $d \times d$-transformations of the adjustment filters. It is not obvious how to identify a mean field process for the direct form of the ETKF in order to capture those benefits, which remains an open and interesting question for future research. 

\bigskip 
\noindent 
{\bf Acknowledgements} The research of Theresa Lange and Wilhelm Stannat has been partially funded by Deutsche Forschungsgemeinschaft (DFG) - SFB1294/1 - 318763901.

\appendix
\section{Proof of the Lemmata}

\noindent
\subsection{Proof of Lemma \ref{VerifyAssum}} 
\label{appVerifyAssum}
For the EAKF and the ETKF, the form of the transformation $\cT$ immediately follows 
from \eqref{IntegralTransformA}. It remains to verify Assumption \ref{AssumT}. To simplify 
notations let $\Theta := H^TR^{-1}H$. We will show that Assumption \eqref{assumT1} holds 
with 
\begin{equation} 
\label{Boundedness} 
\left\|\cT(P)\right\| \leq 1 + \frac{1}{2}\|\Theta\|\|P\|. 
\end{equation} 

\noindent 
Indeed, 
\begin{align*}
e^{-tP\Theta} - {\rm Id}  
= -\int_0^t e^{-sP\Theta}P\Theta{\rm d}s 
= -\int_0^t \sqrt{P}e^{-s\sqrt{P}\Theta\sqrt{P}}\sqrt{P}\, {\rm d}s\, \Theta .
\end{align*}
Since $\sqrt{P}\Theta\sqrt{P} \in \mathds{R}^{d\times d}_{sym ,+}$, we obtain that 
$\left\|e^{-s\sqrt{P}\Theta\sqrt{P}}\right\| \leq 1$, which implies
\begin{equation}
\label{BoundednessExp} 
\left\|e^{-tP\Theta} - {\rm Id}\right\| \leq \int_0^t \left\|\sqrt{P}\right\|^2{\rm d}s\, \|\Theta\|
= t\|P\|\|\Theta\|.
\end{equation}
Integrating up w.r.t. $t$ yields \eqref{Boundedness}, since  
\begin{equation*}
\begin{aligned} 
\left\|\cT(P)\right\| 
& \leq \frac{1}{\sqrt{\pi}} \int_0^\infty \frac{e^{-t}}{\sqrt{t}} 
\left\|e^{-tP\Theta} - {\rm Id}\right\|{\rm d}t + \|{\rm Id}\| \\ 
& \leq \frac{1}{\sqrt{\pi}} \int_0^{\infty} e^{-t} \sqrt{t}\, {\rm d}t\,  \|\Theta\|\|P\| + 1  
= \frac{1}{2}\|\Theta\|\|P\| + 1.
\end{aligned} 
\end{equation*}

\noindent
Assumption \eqref{assumT2} holds with
\begin{equation} 
\label{Lipschitz}
\left\|\cT(P)-\cT(Q)\right\|\ \leq \left( 1 + \|P\|^2 + \|Q\|^2 \right)\left(1 + \|\Theta\|^3 \right) 
\|P-Q\|. 
\end{equation} 

\noindent 
Indeed, first recall Duhamel's formula 
\begin{equation} 
\label{Duhamel} 
e^{-tP\Theta}-e^{-tQ\Theta} = \int_0^t e^{-(t-s) P\Theta}(Q-P)\Theta e^{-sQ\Theta}{\rm d}s
\end{equation} 
which implies that 
\begin{equation} 
\begin{aligned}
\left\| e^{-tP\Theta} \right.&\left. -e^{-tQ\Theta}\right\| 
\leq \int_0^t \|e^{-(t-s) P\Theta}\|\|Q-P\| \|\Theta \|\|e^{-sQ\Theta}\|\, {\rm d}s \\
& \le \int_0^t (1 + (t-s)\|P\Theta\| )( 1 + s \|Q\Theta\|) \, {\rm d}s \, 
\|\Theta \| \|P-Q\| \\ 
& = \left( t + \frac{t^2}{2} (\|P\Theta\| + \|Q\Theta\|) + \frac{t^3}{6}  \|P\Theta\|\|Q\Theta\|\right) 
\|\Theta \| \|P-Q\|. 
\end{aligned}
\end{equation} 
Integrating w.r.t. yields 
\begin{align*}
\big\|\cT(P) & -\cT(Q)\big\|
\le \frac 1{\sqrt{\pi}} \int_0^\infty \frac{e^{-t}}{\sqrt{t}} \left\| e^{-tP\Theta}-e^{-tQ\Theta}\right\| 
\, {\rm d}t \\
& \le \frac 1{\sqrt{\pi}} \int_0^\infty \frac{e^{-t}}{\sqrt{t}} 
\left( t + \frac{t^2}{2} (\|P\Theta\| + \|Q\Theta\|) + \frac{t^3}{6}  \|P\Theta\|\|Q\Theta\|\right) 
\, {\rm d}t \cdot \\ 
& \hspace{3cm} \cdot\|\Theta \| \|P-Q\| \\
& \lesssim \left( 1 + \|P\|^2 + \|Q\|^2 \right)\left(1 + \|\Theta\|^3 \right) \|P-Q\|, 
\end{align*}
proving \eqref{Lipschitz}. 

\medskip
\noindent
In case of the unperturbed filter in \cite{whitaker2002}, $\cT$ is given by
\begin{equation}
\label{Tstructure}
\cT(P) := {\rm Id} - PH^T \mathcal{R} \left( P\right) H.
\end{equation}
where 
\[ 
\mathcal{R}(P) := \left(R+HPH^T\right)^{-\frac{1}{2}}\left(\left(R+HPH^T\right)^{\frac{1}{2}} 
+ R^{\frac{1}{2}}\right)^{-1} \, , P\in \R^{d\times d}_{sym,+} . 
\]
Since $\left(R+HPH^T\right)^{\frac{1}{2}} \geq R^{\frac{1}{2}}$, we obtain that 
\begin{equation} 
\label{mathcalR}
\left\|\mathcal{R}(P)\right\| \leq \frac{1}{2}\left\|R^{-\frac{1}{2}}\right\|^2 = \frac 12 \left\|R^{-1}\right\|
\end{equation}
and thus Assumption \eqref{assumT1} is satisfied with
\begin{equation*}
\left\|\cT(P)\right\| \leq 1 + \|P\|\|H\|^2\left\|\mathcal{R}(P)\right\|  
\leq 1 + \frac{1}{2}\|P\|\|H\|^2\left\|R^{-1}\right\|.
\end{equation*}
Furthermore, using $A^{-1} - B^{-1} = -B^{-1}(A - B)A^{-1}$, one can show that
\begin{align*}
& \cT(P) - \cT(Q)\\
&= \left({\rm Id} - QH^T\mathcal{R}(Q)H\right)(P-Q)H^T\mathcal{R}(P) \\
&\hspace{0.5cm}- QH^T\mathcal{R}(Q)R^{\frac{1}{2}}\left( \left(R+HPH^T\right)^{\frac{1}{2}} - \left(R+HQH^T\right)^{\frac{1}{2}}\right)\mathcal{R}(P). 
\end{align*}
Using 
\begin{equation*}
\left(R + HPH^T\right)^{\frac{1}{2}} + \left(R + HQH^T\right)^{\frac{1}{2}} \geq 2R^{\frac{1}{2}}  
\end{equation*}
we may estimate
\begin{equation}
\begin{aligned}
& \left\|\left(R + HPH^T\right)^{\frac{1}{2}} - \left(R + HQH^T\right)^{\frac{1}{2}}\right\|\notag\\
& \hspace{1cm} \leq \frac{1}{2}\left\|R^{-\frac 12}\right\| \left\|H\left(P -Q\right)H^T\right\| 
\leq \frac{1}{2}\left\|R^{-\frac 12}\right\| \|H\|^2\left\|P - Q\right\|
\end{aligned} 
\end{equation}
(see \cite{vanHemmen1980}), which together with \eqref{mathcalR} yields Assumption \eqref{assumT2} with
\begin{align*}
&\left\|\cT(P) - \cT(Q)\right\|\\
&\leq\frac{1}{2}\|H\| \left\|R^{-1}\right\| \\
&\hspace{0.5cm}\times \left( \left\| {\rm Id} - QH^T\mathcal{R}(Q)H\right\| 
 + \left\|QH^T\mathcal{R}(Q)R^{\frac{1}{2}}\right\|  
 \frac{1}{2}\left\|R^{-\frac 12}\right\| \|H\| \right)\|P-Q\|.
\end{align*}
\qed
\subsection{Proof of Lemma \ref{TK-MeanF}} 
\label{appTK-MeanF}

The claim is easily checked by observing
\begin{equation*}
\begin{aligned} 
\left\|{\rm Id} - \cK(P)H\right\| 
& = \left\|{\rm Id} -  PH^T\left(R+HPH^T\right)^{-1}H\right\| \\
& \leq \left( 1 + \|P\| \|H\|^2  \left\|R^{-1}\right\|\right) 
\end{aligned} 
\end{equation*}
as well as
\begin{equation}\label{GainDiff}
\begin{aligned}
\cK(P) - \cK(Q) &= PH^T\left(R+ HPH^T\right)^{-1} - QH^T\left(R+HQH^T\right)^{-1}\notag\\
&= PH^T\left(R+HQH^T\right)^{-1}H\left(Q - P\right)H^T\left(R+HPH^T\right)^{-1} \notag\\
&\hspace{0.5cm} + \left(P - Q\right)H^T\left(R+HQH^T\right)^{-1}\\
&= \left({\rm Id} - \cK(P)H\right)\left(P - Q\right)H^T\left(R+ HQH^T\right)^{-1}
\end{aligned}
\end{equation}
where we used the standard result $A^{-1} - B^{-1} = -B^{-1}(A - B)A^{-1}$, 
yielding
\begin{equation*}
\begin{aligned} 
\left\|\cK(P) - \cK(Q)\right\| 
& \leq \left\|{\rm Id} - \cK(P)H\right\|\|H\|\left\|R^{-1}\right\|\|P -Q\| \\
& \leq \left(1 + \|P\| \|H\|^2 \left\|R^{-1}\right\|\right) \left\|R^{-1}\right\| \|P -Q\| .
\end{aligned} 
\end{equation*}
\qed

\subsection{Proof of Lemma \ref{PSigma}}\label{appPSigma}
For $v \in \mathds{R}^{d}$, $\|v\| =1$, we may estimate 
\begin{equation}
\begin{aligned} 
\left\langle P_k^{f} v, v\right \rangle 
& = \frac 1{M-1} \sum_{i=1}^M \left \langle X_{k}^{(i),f} - \bar{x}_{k}^{f}, v\right \rangle^2 \\ 
& = \frac 1{M-1} \sum_{i=1}^M \left \langle B\left( X_{k-1}^{(i),a}\right) - \bar{b}_{k-1}^a    
+ C \left(W_k^{(i)} - \bar{w}_k\right), v\right \rangle^2 \\ 
& \leq \frac{2}{M-1} \sum_{i=1}^M \left \langle B\left(X_{k-1}^{(i),a}\right) - \bar{b}_{k-1}^a, v  
\right \rangle^2 + \left \langle C \left( W_k^{(i)} - \bar{w}_k\right), v \right \rangle^2.
\end{aligned} 
\end{equation}
Here, $ \bar{b}_{k-1}^a := \frac 1M \sum_{i=1}^M B\left(X_{k-1}^{(i),a} \right)$ and 
$\bar{w}_k := \frac 1M \sum_{i=1}^M W_k^{(i)}$. Using
$$ 
\begin{aligned}
& \left \langle B\left(X_{k-1}^{(i),a}\right) - \bar{b}_{k-1}^{a}, v\right \rangle^2  
\leq \left\|B\left(X_{k-1}^{(i),a}\right) - \bar{b}_{k-1}^{a}\right\|^2\\
& \qquad\leq 2\left\|B\left(X_{k-1}^{(i),a}\right) - B\left(\bar{x}_{k-1}^{a}\right)\right\|^2  
+ 2\left\|B\left(\bar{x}_{k-1}^{a}\right) - \bar{b}_{k-1}^{a}\right\|^2\\
& \qquad\leq 2\left\|B\left(X_{k-1}^{(i),a}\right) - B\left(\bar{x}_{k-1}^{a}\right)\right\|^2  
+ \frac{2}{M}\sum_{j=1}^M \left\|B\left(X_{k-1}^{(j),a}\right) - B\left(\bar{x}_{k-1}^{a}\right) 
  \right\|^2\\
& \qquad\leq 2\|B\|_{\text{Lip}}^2\left(\left\|X_{k-1}^{(i),a} - \bar{x}_{k-1}^{a}\right\|^2  
+ \frac{1}{M}\sum_{j=1}^M \left\|X_{k-1}^{(j),a} - \bar{x}_{k-1}^{a}\right\|^2\right)
\end{aligned} 
$$
as well as
\begin{equation}
\label{trP}
\text{tr}\left(P_{k-1}^{a}\right)  
= \frac{1}{M-1}\sum_{i=1}^M \left\|X_{k-1}^{(i),a}  - \bar{x}_{k-1}^{a}\right\|^2,
\end{equation}
we obtain the estimate
\begin{equation}
\begin{aligned}
\left\langle P_k^{f} v, v\right\rangle 
&\leq 8\|B\|_{\text{Lip}}^2\text{tr}\left(P_{k-1}^{a}\right) 
+ \frac{2}{M-1}\left\langle C\mathcal{W}_k\mathcal{W}_k^T C^T v, v\right\rangle\\
&\leq 8\|B\|_{\text{Lip}}^2\text{tr}\left(P_{k-1}^{a}\right) 
+ \frac{2}{M-1}\left\| C\mathcal{W}_k\mathcal{W}_k^T C^T\right\| 
\end{aligned}  
\end{equation}
with 
$$ 
\mathcal{W}_k := \left[W_k^{(i)} - \bar{w}_k\right]_{i=1}^M. 
$$ 
Since $\text{tr} (P_{k-1}^a) \leq d \cdot \left\|P_{k-1}^a\right\|$, taking the supremum over all $v$ with $\|v\|=1$ yields the estimate
\begin{equation*}
\left\|P_k^{f}\right\| \leq 8d\|B\|_{\text{Lip}}^2\left\|P_{k-1}^{a}\right\| + \frac{2}{M-1}\left\| C\mathcal{W}_k\mathcal{W}_k^T C^T\right\|.
\end{equation*}
Together with
\begin{equation*}
P_k^{a} = \left({\rm Id} - \mathcal{K}\left(P_k^{f}\right)G\right)P_k^{f}  
= P_k^f - P_k^f H^T\left( R + H P_k^f H^T \right)^{-1} H P_k^f
\leq P_k^{f}
\end{equation*}
we may iterate over $k$ to obtain
\begin{equation}\label{NormPIter}
\left\|P_k^{f}\right\| \leq \left( 8d\|B\|_{\text{Lip}}^2 \right)^k \left\|P_0^a\right\|
 + \frac{2}{M-1} \sum_{l=1}^k \left( 8d\|B\|_{\text{Lip}}^2 \right)^{k-l}\left\|C\mathcal{W}_l\mathcal{W}_l^TC^T\right\|. 
\end{equation}
By independence of the $W^{(i)}$, the strong law of large numbers implies
\begin{equation*}
\begin{aligned} 
\frac{1}{M-1} C\mathcal{W}_k\mathcal{W}_k^T C^T  
& = C\left( \frac{1}{M-1} \sum_{i=1}^M \left(W_k^{(i)} - \bar{w}_k\right)  
 \left(W_k^{(i)} - \bar{w}_k\right) ^T \right) C^T \\ 
& \rightarrow C \mathds{E}\left[ \left(W_k^{(i)} - \bar{w}_k\right)  
 \left(W_k^{(i)} - \bar{w}_k\right)^T \right] C^T \\ 
& = d\, CC^T \quad \mathds{P}-\text{a.s.} 
\end{aligned} 
\end{equation*}
as $M$ tends to infinity, which gives
\begin{equation*}
\limsup_{M \rightarrow\infty} \left\|P_k^{f}\right\| 
\leq \left( 8d\|B\|_{\text{Lip}}^2 \right)^k \left\|P_0^a\right\| 
 + 2d \left\|CC^T\right\| \sum_{l=1}^k \left( 8d\|B\|_{\text{Lip}}^2 \right)^{k-l} 
\end{equation*}
and in particular
\begin{equation}
\sup_{M \geq 2}\left\|P_k^{f}\right\| < \infty \quad \mathds{P}-\text{a.s.}.
\end{equation}
Furthermore, from \eqref{NormPIter} we obtain
\begin{equation*}
\left\|P_k^{f}\right\|_p  
 \leq \left( 8d\|B\|_{\text{Lip}}^2 \right)^k \left\|P_{0}^a\right\|_p  
 + \frac{2}{M-1} \sum_{l=1}^k \left( 8d\|B\|_{\text{Lip}}^2 \right)^{k-l}  
 \left\| C\mathcal{W}_l\mathcal{W}_l^TC^T\right\|_p . 
\end{equation*}
Since 
\begin{align*}
&\frac{1}{M-1} \left\|C\mathcal{W}_k\mathcal{W}_k^TC^T\right\|_p \\
& = \left\|\frac{1}{M-1}\sum_{i=1}^M C\left(W_k^{(i)} - \bar{w}_k\right)\left(W_k^{(i)}  
-\bar{w}_k\right)^TC^T\right\|_p\\
& \leq \frac{M}{M-1} \left\|W_k^{(1)} - \bar{w}_k\right\|_{2p}^2  
\rightarrow \|C\|^2\left\|W_k^{(1)}\right\|_{2p}^2, M \rightarrow \infty,
\end{align*}
it therefore holds 
\[ 
\sup_{M \geq 2} \left\|P_k^{f}\right\|_p < \infty ,  
\]
provided $\left\|P_0^a\right\|_p < \infty$.

\medskip 
\noindent
For the proof of the second statement, using superscript $\hat{\ }$ to denote centered ensemble members, 
note that 
\begin{equation*} 
\begin{aligned}
&\left\|P_k^{f} - \bar{P}_k^{f,M}\right\|\\
& = \left\| \frac{1}{M-1} \sum_{i=1}^{M} \left(\hat{X}_k^{(i),f} - \hat{\bar{X}}_k^{(i),f} \right) 
\left(\hat{X}_k^{(i),f}\right)^T +  \hat{\bar{X}}_k^{(i),f} \left(\hat{X}_k^{(i),f}  
- \hat{\bar{X}}_k^{f}\right)^T\right\| \\
& \leq  \left( \left(\frac{1}{M-1} \sum_{i=1}^M  \left\|\hat{X}_k^{(i),f}\right\|^2\right)^{\frac{1}{2}} + 
\left( \frac{1}{M-1}\sum_{i=1}^M \left\|\hat{\bar{X}}_k^{(i),f} \right\|^2 \right)^{\frac{1}{2}}\right) \\ 
& \hspace{0.5cm} \times \left( \frac{1}{M-1}\sum_{i=1}^M \left\|\hat{X}_k^{(i),f}  
- \hat{\bar{X}}_k^{(i),f} \right\|^2 \right)^{\frac{1}{2}}\\
&\leq \left(\left(\text{tr}\left(P_k^{f}\right)\right)^{\frac{1}{2}} + \left(\text{tr}\left(\bar{P}_k^{f,M}
\right)\right)^{\frac{1}{2}}\right)\left(\frac{4}{M-1}\sum_{i=1}^M\left\|X_k^{(i),f} - \bar{X}_k^{(i),f}
\right\|^2\right)^{\frac{1}{2}}
\end{aligned}
\end{equation*}
where in the last line we used the Cauchy-Schwartz-inequality as well as identity \eqref{trP}.\\
\qed

\subsection{Proof of Lemma \ref{LpBar}}\label{appLpBar}
We proceed via induction over $k$. First, note that since $B$ is Lipschitz continuous there exists a 
constant $C_B$ such that
\[
\|B(x)\| \leq C_B(1+\|x\|). 
\]
Denote
\[
\bar{M}_k^{p,f/a} := \mathds{E}\left[\left\|\bar{X}_k^{(i),f/a}\right\|^p\right]^{\frac{1}{p}}, 
\]
then
\begin{equation*}
\bar{M}_k^{p,f} \leq C_B\left(1+\bar{M}_{k-1}^{p,a}\right) + \left\|CW_k^{(i)}\right\|_p 
\end{equation*}
which yields the claim for the forecast ensembles by the induction assumption. Further,
\begin{equation*}
\bar{X}_k^{(i),a} = \bar{m}_k^{f} + \bar{\cK}_k\left(Y_k - H\bar{m}_k^{f}\right) 
+ \bar{\cT}_k\left(\bar{X}_k^{(i),f} - \bar{m}_k^{f}\right)
\end{equation*}
implies
\begin{equation*}
\bar{M}_k^{p,a} \leq \left(1+\left\|\bar{\cT}_k\right\|\right)\left\|\bar{m}_k^{f}\right\| 
+ \left\|\bar{\cK}_k\right\|\left(\left\|Y_k\right\| + \|H\|\left\|\bar{m}_k^{f}\right\|\right) 
+ \left\|\bar{\cT}_k\right\|\bar{M}_k^{p,f}
\end{equation*}
yielding the claim for the analysis ensemble again by induction.\\
\qed
\end{document}